\pgfplotsset{compat=1.13}
\journal{Linear Algebra Appl.}
\newcommand{\karp}{Karpelevi{\v{c}}}
\newcommand{\ii}{\mathsf{i}}	
\DeclareMathOperator{\Arg}{Arg}
\DeclareMathOperator{\conv}{conv}
\newcommand{\floor}[1]{\left\lfloor #1 \right\rfloor}
\newtheorem{theorem}{Theorem}[section]
\newtheorem{lemma}[theorem]{Lemma}
\newtheorem{proposition}[theorem]{Proposition}
\newtheorem{conjecture}[theorem]{Conjecture}
\theoremstyle{definition}
\newtheorem{definition}[theorem]{Definition}
\newtheorem{observation}[theorem]{Observation}
\newtheorem{remark}[theorem]{Remark}
\newtheorem{example}[theorem]{Example}
\begin{document}
\begin{frontmatter}
    \title{Demystifying the \karp~theorem}
    \author{Devon N.~Munger}
    \author{Andrew L.~Nickerson}
        \ead{nickera@wwu.edu}
    \author[add]{Pietro Paparella\corref{cor1}}
        \ead{pietrop@uw.edu}
        \affiliation[add]{organization={Division of Engineering \& Mathematics, University of Washington Bothell},
                    addressline={18115 Campus Way NE}, 
                    city={Bothell},
                    postcode={98011-8246}, 
                    state={WA},
                    country={U.S.A}}
        \cortext[cor1]{Corresponding author.}

\begin{abstract}
    The statement of the \karp~theorem concerning the location of the eigenvalues of stochastic matrices in the complex plane (known as the \karp~region) is long and complicated and his proof methods are, at best, nebulous. Fortunately, an elegant simplification of the statement was provided by Ito---in particular, Ito's theorem asserts that the boundary of the \karp~region consists of arcs whose points satisfy a polynomial equation that depends on the endpoints of the arc. Unfortunately, Ito did not prove his version and only showed that it is equivalent.
    
    More recently, Johnson and Paparella showed that points satisfying Ito's equation belong to the \karp~region. Although not the intent of their work, this initiated the process of proving Ito's theorem and hence providing another proof of the \karp~theorem.  
    
   The purpose of this work is to continue this effort by showing that an arc appears in the prescribed sector. To this end, it is shown that there is a continuous function $\lambda:[0,1] \longrightarrow \mathbb{C}$ such that $\mathsf{P}^\mathsf{I}(\lambda(\alpha)) = 0$, $\forall \alpha \in [0,1]$, where $\mathsf{P}^\mathsf{I}$ is a Type I reduced Ito polynomial. It is also shown that these arcs are simple. Finally, an elementary argument is given to show that points on the boundary of the \karp~region are extremal whenever $n > 3$.  
\end{abstract}

\begin{keyword}
    \karp~arc \sep \karp~region \sep stochastic matrix

    \MSC[2020] 15A18 \sep 15B51 \sep 30C15
\end{keyword}

\end{frontmatter}

\section{Introduction}

A \emph{stochastic} matrix is an entrywise nonnegative matrix whose rows sum to one. In 1938\footnote{Many authors misattribute Kolmogorov's proposal to a 1937 paper, which is perhaps due to a misattribution by Gantmacher in 1959 (for full details, see Swift \cite[p.~2]{s1972}).}, Kolmogorov posed the problem of characterizing the subset of the complex plane, which we denote by $\Theta_n$, that comprises all eigenvalues of all $n$-by-$n$ \emph{stochastic} matrices \cite[p.~2]{s1972}. 

Dmitriev and Dynkin \cite{dd1946} (see \cite{ams140} for an English translation) obtained a partial solution, and \karp~\cite{k1951} (see \cite{ams140} for an English translation) solved the problem by showing that the boundary of $\Theta_n$ consists of curvilinear \emph{arcs} (hereinafter, \emph{\karp~arcs} or \emph{K-arcs}), whose points satisfy a polynomial equation that depends on the endpoints of the arc. 

For $n \in \mathbb{N}$, let $F_n \coloneqq  \{ p/q \mid 0\leq p \le q \leq n,~\gcd(p,q)=1 \}$. The following is an equivalent form of \karp's theorem due to Ito. 

\begin{theorem} 
        \thlabel{karpito}
    The region $\Theta_n$ is symmetric with respect to the real axis, is included in the unit-disc $\{ z \in \mathbb{C} \mid |z| \leq 1\}$, and intersects the unit-circle $\{ z \in \mathbb{C} \mid |z| = 1\}$ at the points $\left\{ e^{\frac{2\pi p}{q}\ii} \mid p/q \in F_n \right\}$. The boundary of $\Theta_n$ consists of these points and of curvilinear arcs connecting them in circular order. 
    
    Let the endpoints of an arc be $e^{\frac{2\pi p}{q}\ii}$ and $e^{\frac{2\pi r}{s}\ii}$ ($q \le s$). Each of these arcs is given by the following parametric equation:  
        \begin{equation}
            \label{itoequation}
                t^{s} \left( t^{q} - \beta \right)^{\floor{n/q}} = \alpha^{\floor{n/q}} t^{q\floor{n/q}},\ \alpha \in [0,1], ~\beta\coloneqq 1-\alpha.
        \end{equation} 
\end{theorem}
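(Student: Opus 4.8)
My plan is to prove the theorem above by combining the Johnson--Paparella inclusion (points satisfying \eqref{itoequation} lie in $\Theta_n$) with three facts about the locus of \eqref{itoequation} in a prescribed sector: it is parametrized by a continuous $\lambda:[0,1]\to\mathbb C$, it is a simple arc from one prescribed endpoint to the other, and each of its points is extremal for $\Theta_n$ when $n>3$. First, the classical part of the statement: symmetry about the real axis holds because a stochastic matrix is real, hence has conjugate-closed spectrum; inclusion in $\{|z|\le 1\}$ is the Perron--Frobenius bound $\rho(A)=1$; and a unimodular eigenvalue of an irreducible $n$-by-$n$ stochastic matrix is a $q$-th root of unity with $q$ at most the order, while every $e^{\frac{2\pi p}{q}\ii}$ with $p/q\in F_n$ is attained by a direct sum of a $q$-cycle with a stochastic block of order $n-q$, giving $\Theta_n\cap\{|z|=1\}=\{e^{\frac{2\pi p}{q}\ii}:p/q\in F_n\}$. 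I also use that $\Theta_n$ is compact, connected, and star-shaped about $0$: if $Aw=\mu w$ and $v^{\mathsf T}A=v^{\mathsf T}$ with $v>0$ scaled so $v^{\mathsf T}\mathbf 1=1$, then $M_t\coloneqq tA+(1-t)\mathbf 1 v^{\mathsf T}$ is stochastic and has $t\mu$ as an eigenvalue for every $t\in[0,1]$.

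Now fix a sector with endpoints $e^{\frac{2\pi p}{q}\ii}$, $e^{\frac{2\pi r}{s}\ii}$, $q\le s$, and let $\mathsf P^{\mathsf I}_\alpha$ be the reduced Ito polynomial of \eqref{itoequation} (of Type I, say; Type II is analogous); its coefficients are polynomials in $\alpha$, and at $\alpha=0$ and $\alpha=1$ the two endpoints occur as roots (a direct check). Continuity of the roots of a monic polynomial in its coefficients produces a candidate branch; the real work is to follow the \emph{correct} branch through the finitely many $\alpha$ at which roots collide. I would do this by characterizing $\lambda(\alpha)$ intrinsically --- for instance as the unique root of $\mathsf P^{\mathsf I}_\alpha$ in the open sector, verifying both that exactly one root lies there and that no root crosses the bounding rays for $\alpha\in(0,1)$ --- then applying the implicit function theorem where $\bigl(\mathsf P^{\mathsf I}_\alpha\bigr)'(\lambda(\alpha))\neq 0$ and patching at the exceptional parameters. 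For simplicity, I would show $\arg\lambda(\alpha)$ is strictly monotone along the arc by implicit differentiation of \eqref{itoequation}; monotonicity of the argument also exhibits the arc as the polar graph $r=b(\theta)$ of a single-valued function over the sector's angular range.

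It remains to glue. That the arc lies in $\Theta_n$ is Johnson--Paparella. That each of its points lies on $\partial\Theta_n$ is the extremality statement for $n>3$: $\lambda(\alpha)$ is an eigenvalue of an explicit, rigidly constrained stochastic matrix (essentially a companion matrix of an Ito polynomial, up to a dilation and a stochastic block), and one shows that any $n$-by-$n$ stochastic matrix with an eigenvalue near $\lambda(\alpha)$ must be near that matrix, so its eigenvalue cannot fall strictly outside the arc; quantifying this through the constrained coefficients of the characteristic polynomial of an order-$n$ stochastic matrix confines a one-sided neighborhood of $\lambda(\alpha)$ to the complement of $\Theta_n$. (For $n\le 3$ the region is degenerate or classically known.) Collecting the arcs over consecutive elements of $F_n$, together with the unimodular points, yields a Jordan curve $\mathcal B$ enclosing $0$ with $\mathcal B\subseteq\partial\Theta_n$; because $\mathcal B$ is a polar graph about $0$ and $\Theta_n$ is star-shaped about $0$, the open region $\Omega$ bounded by $\mathcal B$ satisfies $\Omega\subseteq\Theta_n$ and hence $\overline\Omega\subseteq\Theta_n$; combined with the classical ``necessity'' bound $\Theta_n\subseteq\overline\Omega$ (no eigenvalue lies beyond the arcs), this yields $\partial\Theta_n=\mathcal B$ and $\Theta_n=\overline\Omega$, as claimed.

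The principal obstacle is the middle step: selecting and tracking the correct root of $\mathsf P^{\mathsf I}_\alpha$ across root collisions and proving the resulting curve is simple. The extremality step is a close second --- its claim that ``any nearby stochastic matrix is nearly the extremal one'' needs a genuinely quantitative argument and is precisely where the hypothesis $n>3$ enters.
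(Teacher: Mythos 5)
The first thing to say is that the paper itself does \emph{not} prove \thref{karpito}. It reduces the theorem to \thref{bigprop}, credits Johnson--Paparella with part~\ref{firststep}, proves part~\ref{secondstep} only for Type~0 and Type~I reduced Ito polynomials (and for those Type~II/III arcs that arise as pointwise powers of Type~I arcs), and leaves the remaining Type~II/III cases of part~\ref{secondstep} and \emph{all} of part~\ref{thirdstep} open---the latter explicitly as a Conjecture in the final section. So there is no complete proof in the paper to compare against, and your proposal cannot reasonably be expected to produce one in full; the useful question is whether your sketch identifies the right decomposition and whether your proposed arguments for the hard steps are sound.

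At the structural level your roadmap matches the paper's reduction: classical preliminaries, Johnson--Paparella for membership, a branch-tracking argument to obtain a continuous $\lambda$ staying in the prescribed sector, and an extremality argument to force $\lambda(\alpha)\in\partial\Theta_n$. Your star-shapedness observation, $M_t\coloneqq tA+(1-t)ev^{\mathsf T}$ with $v^{\mathsf T}A=v^{\mathsf T}$, $v^{\mathsf T}e=1$, is a clean substitute for the paper's Lemma on $\conv(1,\lambda,\dots,\lambda^p)\subseteq\Theta_n$; note you should check $v^{\mathsf T}w=0$ when $Aw=\mu w$ with $\mu\neq 1$ (it holds because $v^{\mathsf T}Aw$ equals both $v^{\mathsf T}w$ and $\mu v^{\mathsf T}w$), otherwise $M_tw\neq t\mu w$. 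Your branch-tracking plan (uniqueness in the open sector, no crossings of the bounding rays, implicit function theorem away from collisions) is in spirit what the paper actually carries out for Type~I via the ``forbidden rays'' theorem together with Kato's continuous-selection theorem and the two Farey-fraction observations that bracket $p/q$ between $\tfrac{2r\mp 1}{2s}$; but the paper pointedly does not know how to do this for Type~II/III polynomials that are not pointwise powers of Type~I arcs, and your sketch gives no indication of how to handle those sectors either.

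The fatal gap is the extremality step. Your necessity direction $\Theta_n\subseteq\overline\Omega$ hinges entirely on the claim that every point of the arc is extremal for $\Theta_n$, and this is exactly part~\ref{thirdstep} of \thref{bigprop}, which the paper states as an open Conjecture with no proof offered. The argument you gesture at---that any stochastic matrix with an eigenvalue near $\lambda(\alpha)$ must be close to the Ito companion matrix, so that a one-sided neighborhood of $\lambda(\alpha)$ lies outside $\Theta_n$---is not carried out and is not obviously true: many distinct stochastic matrices share an eigenvalue near a given point, and nothing in the constrained-coefficient picture immediately rules them out. Be careful also not to conflate this with what the paper's Section on extremal points actually proves, namely $\partial\Theta_n\subseteq E_n$ for $n>3$ via Dubuc--Malik and an explicit distance-to-line computation. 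That result says boundary points are extremal; it does not say arc points lie on the boundary. Without that converse your gluing step, and hence the theorem, does not follow. You should regard both the general Type~II/III branch-tracking and the extremality of the Ito arcs as genuinely open problems rather than as details to be filled in.
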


Unfortunately, Ito did not prove \thref{karpito} and only showed that the statement above is equivalent to the statement given by \karp~\cite[p.~80, Theorem B]{ams140}. The statement by \karp~is long and cumbersome and the arguments and methods he employed to prove his theorem are, at best, nebulous.  

As such, an elementary proof of \thref{karpito} would place \karp's work on a firmer foundation and is of interest to researchers---especially those working on the celebrated \emph{nonnegative inverse eigenvalue problem} \cite{l2019}. 

Following Johnson and Paparella \cite{jp2017}, equation \eqref{itoequation} is called the \emph{Ito equation} and the polynomial
    \begin{equation}
        \label{itopolynomial}
            \mathsf{P}_\alpha(t)\coloneqq  t^s(t^q-\beta)^{\floor{n/q}}-\alpha^{\floor{n/q}}t^{q\floor{n/q}},\ \alpha\in[0,1],\ \beta \coloneqq 1-\alpha    
    \end{equation}
is called the \textit{Ito polynomial}.

In addition to establishing the basic properties listed in the first paragraph of \thref{karpito} (which are proven in Section \ref{sec:notandback}), proving the following result would yield a proof of \thref{karpito}.

\begin{proposition}
    \thlabel{bigprop}
        Suppose that $n > 3$ and $p/q$ and $r/s$ are Farey neighbors.
                \begin{enumerate}[label=\roman*)]
                \item \label{firststep} If $\zeta$ satisfies equation \eqref{itoequation}, then $\zeta \in \Theta_n$.
                \item \label{secondstep} There is a continuous function $\lambda: [0,1] \longrightarrow \mathbb{C}$ such that $\lambda(0) = e^{\frac{2\pi p}{q} \ii}$, $\lambda(1) = e^{\frac{2\pi r}{s} \ii}$, and $\mathsf{P}_\alpha (\lambda(\alpha)) = 0, \forall \alpha \in [0,1]$. Furthermore, 
                \[ \min \left\{ \frac{p}{q}, \frac{r}{s} \right\} \le \frac{\Arg \lambda(\alpha)}{2\pi} \le \max \left\{ \frac{p}{q}, \frac{r}{s} \right\}, \forall \alpha \in [0,1]. \]
                \item \label{thirdstep} If $\lambda$ is a function satisfying the properties listed in part \ref{secondstep}, then $\lambda(\alpha) \in \partial \Theta_n$, $\forall \alpha \in [0,1]$.
            \end{enumerate}
\end{proposition}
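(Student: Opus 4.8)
For \ref{firststep} I would simply invoke Johnson and Paparella \cite{jp2017}: a solution $\zeta$ of \eqref{itoequation} is realized as an eigenvalue of an explicit $n$-by-$n$ stochastic matrix of companion type, so $\zeta\in\Theta_n$. The substance of the proposition, and the main obstacle, lies in part \ref{secondstep}.

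For \ref{secondstep}, first reduce $\mathsf{P}_\alpha$. Since $\mathsf{P}_\alpha(0)=0$ with order $\min\{s,q\floor{n/q}\}$ and $\mathsf{P}_\alpha(1)=\alpha^{\floor{n/q}}-\alpha^{\floor{n/q}}=0$ (with $\mathsf{P}_\alpha'(1)=s\alpha^{\floor{n/q}}+q\floor{n/q}\alpha^{\floor{n/q}-1}\beta>0$ for $\alpha\in(0,1)$), dividing out the trivial factors $t^{\min\{s,q\floor{n/q}\}}$ and $(t-1)$ yields the (Type~I) reduced Ito polynomial $\mathsf{P}^{\mathsf{I}}_\alpha$: monic, of $\alpha$-independent degree, with coefficients polynomial in $\alpha$. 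Because $\mathsf{P}^{\mathsf{I}}_\alpha$ is monic of constant degree, its zero multiset depends continuously on $\alpha$; the zeros of $\mathsf{P}^{\mathsf{I}}_0$ include the $q$-th roots of unity (as $\mathsf{P}_0(t)=t^s(t^q-1)^{\floor{n/q}}$), in particular $e^{\frac{2\pi p}{q}\ii}$ with multiplicity $\floor{n/q}$, and those of $\mathsf{P}^{\mathsf{I}}_1$ include the $s$-th roots of unity, in particular $e^{\frac{2\pi r}{s}\ii}$. The aim is to follow the zero branch emanating from $e^{\frac{2\pi p}{q}\ii}$ and to prove that it (a) remains in the closed angular sector $\overline\Sigma$ spanned by $e^{\frac{2\pi p}{q}\ii}$ and $e^{\frac{2\pi r}{s}\ii}$ for every $\alpha$; (b) is the unique zero of $\mathsf{P}^{\mathsf{I}}_\alpha$ in the open sector $\Sigma$ for each $\alpha\in(0,1)$; and (c) reaches $e^{\frac{2\pi r}{s}\ii}$ at $\alpha=1$. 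Continuity of the resulting map $\lambda$, including across any coincidences of zeros, follows from the local Puiseux parametrization of the algebraic function cut out by $\mathsf{P}^{\mathsf{I}}$, and the displayed inequality is precisely (a).

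The crux, and the main obstacle, is the angular bound (a). The key observation is that on the ray $\Arg z=2\pi p/q$ one has $z^q=|z|^q\in\mathbb R_{>0}$, hence $z^q-\beta\in\mathbb R$; combined with the Farey relation $|ps-rq|=1$ (so that $e^{2\pi i ps/q}=e^{\pm 2\pi i/q}$, non-real for $q\geq 3$), substituting $z=|z|e^{\frac{2\pi p}{q}\ii}$ into \eqref{itoequation} equates a non-real multiple of a real number to a positive real number, which is impossible unless $z=0$. Hence $\mathsf{P}^{\mathsf{I}}_\alpha$ has no zero on that ray for $\alpha\in(0,1)$, and symmetrically none on the ray $\Arg z=2\pi r/s$ (using $z^s=|z|^s>0$). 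Since by \ref{firststep} the tracked branch lies in $\overline{\mathbb D}$, cannot reach $\{|z|=1\}$ (the points of $\Theta_n$ on the unit circle are the $e^{2\pi i\rho}$, $\rho\in F_n$, and none has argument strictly interior to $\overline\Sigma$, as $p/q$ and $r/s$ are Farey neighbors), and cannot reach $0$, it stays in the connected set $\Sigma$ throughout $(0,1)$, which is (a); a winding-number count along $\partial(\Sigma\cap\mathbb D)$, where the number of zeros is constant on $(0,1)$ and, by a local Puiseux analysis at $e^{\frac{2\pi p}{q}\ii}$, equals $1$ for small $\alpha>0$, gives (b). The cases where the ray is the real axis are exceptional: $q=1$ forces $s=n$ and gives the line segment $\lambda(\alpha)=(1-\alpha)+\alpha e^{\frac{2\pi r}{s}\ii}$, while $q=2$ is handled by a Newton polygon expansion at $z=-1$. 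Simplicity of the arc then follows from monotonicity of $\alpha\mapsto\Arg\lambda(\alpha)$, or of $\alpha\mapsto|\lambda(\alpha)|$, on suitable subintervals, again via the Puiseux description.

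For \ref{thirdstep}, put $C\coloneqq\lambda([0,1])$; by \ref{firststep}, $C\subseteq\Theta_n$, and its endpoints lie on $\partial\Theta_n$. Using the structural facts from Section~\ref{sec:notandback} --- $\Theta_n$ is compact and star-shaped about the origin, and $\partial\Theta_n$ consists of the points $e^{2\pi i\rho}$, $\rho\in F_n$, joined in circular order by arcs contained in the closed sectors they span --- I would first establish the elementary statement, valid for $n>3$, that every point of $\partial\Theta_n$ is extremal: it has maximal modulus among points of $\Theta_n$ of the same argument, so that $\partial\Theta_n$ is exactly the radial graph $\theta\mapsto R(\theta)e^{i\theta}$. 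This is where $n>3$ is indispensable; for $n=3$ the claim fails, as $\Theta_3$ has the one-dimensional tail $[-1,-\tfrac12]$, whose points are boundary points that are not radially outermost. Granting extremality, suppose $|\lambda(\alpha)|<R(\theta_0)$ for some $\alpha$ with $\theta_0\coloneqq\Arg\lambda(\alpha)$ strictly interior to the sector. The boundary point $R(\theta_0)e^{i\theta_0}$, being extremal, is realized by an $n$-by-$n$ stochastic matrix in reduced \karp~form, whose characteristic polynomial is an Ito polynomial; the sector analysis of \ref{secondstep} forces it to be the Ito polynomial of the pair $(p/q,r/s)$, so $R(\theta_0)e^{i\theta_0}\in C$, contradicting $|\lambda(\alpha)|<R(\theta_0)$. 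Therefore $|\lambda(\alpha)|=R(\Arg\lambda(\alpha))$ and $\lambda(\alpha)\in\partial\Theta_n$; the two endpoint angles are immediate, since $e^{\frac{2\pi p}{q}\ii},e^{\frac{2\pi r}{s}\ii}\in\partial\Theta_n$. I expect the extremality step for $n>3$, and its combination with the characterization of extremal elements as eigenvalues of \karp-form matrices, to be the second main difficulty.
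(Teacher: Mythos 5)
Your treatment of part~\ref{firststep} matches the paper: both cite Johnson and Paparella, who exhibit an $n$-by-$n$ stochastic matrix whose characteristic polynomial is the reduced Ito polynomial.

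For part~\ref{secondstep}, the overall strategy --- reduce $\mathsf{P}_\alpha$ to a monic polynomial of constant degree, invoke continuity of the zero set, and confine the relevant branch by ``forbidden rays'' on which the polynomial cannot vanish --- is essentially the strategy the paper pursues for Type~I (Theorems~\thref{sqangle} and~\thref{mainresult}), though the paper uses Kato's selection theorem where you invoke Puiseux parametrizations. However, as written your argument has a genuine gap. You establish that $\mathsf{P}^{\mathsf{I}}_\alpha$ has no zero on the two rays $\Arg z=2\pi p/q$ and $\Arg z=2\pi r/s$ for $\alpha\in(0,1)$, then assert that the tracked branch ``stays in the connected set $\Sigma$ throughout $(0,1)$.'' This does not follow: the branch starts at $\omega_q^p$, which lies \emph{on} the ray $\Arg z=2\pi p/q$, and nothing you have written determines to which side of that ray the branch moves for small $\alpha>0$. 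If it moves away from $\Sigma$, it stays on that side for all $\alpha\in(0,1)$ (it cannot re-cross the forbidden $p/q$ ray) and could terminate at, say, $\omega_s^{r-1}$ without ever touching either of your two forbidden rays or the unit circle at a point with argument interior to $\overline\Sigma$. You gesture at a ``local Puiseux analysis at $e^{\frac{2\pi p}{q}\ii}$'' to conclude the interior zero count is $1$, but that local computation is exactly the direction-determining step that is missing and is never carried out. The paper closes the gap globally instead: Theorem~\thref{sqangle} forbids \emph{every} non-real ray of the form $\rho\omega_{2s}^k$ and $\rho\omega_{2q}^k$, and Observations~\thref{obs:farey} and~\thref{obs:farey2} show that the rays at $(2r\pm1)/(2s)$ bracket both $p/q$ and $r/s$; since the branch must end at an $s$-th root of unity and the only one between those bracketing rays is $\omega_s^r$, a branch that initially moved to the wrong side of $p/q$ would have no admissible endpoint. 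You would also want to acknowledge, as the paper does, that this handles only the Type~I reduced polynomial and the Type~II/III arcs that arise as pointwise powers; the remaining cases of part~\ref{secondstep} are left open. (Your proposed simplicity argument via monotonicity is also heavier than necessary: the paper gets simplicity directly from the observation that $\lambda(\alpha)=\lambda(\hat\alpha)$ with $\alpha\ne\hat\alpha$ forces $\lambda^{s-q}=1$, hence $|\lambda|=1$, contradicting Lemma~\thref{type_one_mod}.)

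For part~\ref{thirdstep}, the paper does not prove it at all: it is Conjecture~7.1, and Section~6 establishes only the prerequisite $\partial\Theta_n\subseteq E_n$ for $n>3$. Your sketch hinges on the claim that ``the boundary point $R(\theta_0)e^{\ii\theta_0}$, being extremal, is realized by an $n$-by-$n$ stochastic matrix in reduced \karp~form, whose characteristic polynomial is an Ito polynomial.'' That is essentially \karp's structure theorem for extremal stochastic matrices --- precisely the analysis the paper is trying to circumvent --- so, as written, your proof of~\ref{thirdstep} is circular. Without an independent argument tying extremal points to zeros of Ito polynomials, part~\ref{thirdstep} remains open.
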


Part \ref{firststep} of \thref{bigprop} was proven by Johnson and Paparella \cite{jp2017} as follows: suppose that $\mathsf{P}_\alpha(\zeta) = 0$, with $\zeta \ne 0$. Note that $s \ne q\floor{n/q}$ since $\gcd{(q,s)} = 1$. 
\begin{itemize}
    \item If $\floor{n/q} = n$, then $\zeta$ is a zero of the \emph{Type 0 (Ito) polynomial}
        \begin{equation}
            \label{type_zero_ito}
                \mathsf{P}_\alpha^{\mathsf{0}} (t) = (t-\beta)^n - \alpha^n,~\alpha\in[0,1],\ \beta \coloneqq 1-\alpha.
        \end{equation}
    The Type 0 polynomial corresponds the \emph{Farey pair} $(0/1,1/n)$.

    \item If $\floor{n/q} = 1$, then $\zeta$ is a zero of the \emph{Type I (Ito) polynomial} 
        \begin{equation}
            \label{type_one_ito}
                \mathsf{P}_\alpha^{\mathsf{I}}(t) = t^s-\beta t^{s-q} - \alpha,\ \alpha\in[0,1],\ \beta \coloneqq 1-\alpha.
        \end{equation}
    
    \item If $1 < \floor{n/q} < n$ and $s > q\floor{n/q}$, then $\zeta$ is a zero of the \emph{Type II (Ito) polynomial}
        \begin{equation}
            \label{type_two_ito}
                \mathsf{P}_\alpha^{\mathsf{II}} (t) = (t^q-\beta)^{\floor{n/q}}-\alpha^{\floor{n/q}}t^{q\floor{n/q}-s},\ \alpha\in[0,1],\ \beta \coloneqq 1-\alpha.
        \end{equation}
    
    \item If $1 < \floor{n/q} < n$ and $s < q\floor{n/q}$, then $\zeta$ is a zero of the \emph{Type III (Ito) polynomial}
        \begin{equation}
            \label{type_tre_ito}
                \mathsf{P}_\alpha^{\mathsf{III}} (t) = t^{s-q\floor{n/q}}(t^q-\beta)^{\floor{n/q}}-\alpha^{\floor{n/q}},\ \alpha\in[0,1],\ \beta \coloneqq 1-\alpha. 
        \end{equation}
\end{itemize}
The polynomials given by equations \eqref{type_zero_ito}--\eqref{type_tre_ito} are called the \emph{reduced Ito polynomials}. Johnson and Paparella \cite[Theorem 3.2]{jp2017} showed that for every $\alpha \in [0,1]$, there is a stochastic matrix $A = A(\alpha)$ such that $\chi_A(t) = \mathsf{P}_\alpha^\mathsf{X}(t)$, where $\mathsf{X} \in \{ \mathsf{0},\mathsf{I},\mathsf{II}, \mathsf{III} \}$ and $\chi_A$ denotes the characteristic polynomial of $A$ (for further results, see Kirkland and \v{S}migoc \cite{ks2022}). 

The following remarkable result is due to Kato \cite[Theorem 5.2]{k1995}.

\begin{theorem}[Kato] 
    \thlabel{kato}
        Let $\Lambda(\alpha)$ be an unordered $N$-tuple of complex numbers, depending continuously on a real variable $\alpha$ in a (closed or open) interval $I$. Then there exist $N$ single-valued, continuous functions $\lambda_k(\alpha), k = 1,\ldots,N$, the values of which constitute the $N$-tuple $\Lambda(\alpha)$ for each $\alpha \in I$.  
\end{theorem}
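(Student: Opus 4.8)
The result is classical and is special to a one-dimensional parameter; the plan is to recast it as a path-lifting problem. Identify an unordered $N$-tuple $\{z_1,\dots,z_N\}$ with the monic polynomial $\prod_k(t-z_k)$, equivalently with the vector of its elementary symmetric functions. The induced map from $\mathbb{C}^N/S_N$, with the quotient topology, to $\mathbb{C}^N$ is a homeomorphism: continuity in one direction is clear, and its inverse is exactly the (classical) continuity of the roots of a polynomial as a function of its coefficients. Hence the hypothesis says precisely that $\gamma\colon\alpha\mapsto\Lambda(\alpha)$ is a continuous path in $\mathbb{C}^N/S_N$, and the conclusion is that $\gamma$ lifts along the quotient map $\pi\colon\mathbb{C}^N\to\mathbb{C}^N/S_N$ to a continuous $(\lambda_1,\dots,\lambda_N)$. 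The first step is to reduce this to a purely local statement: if $\gamma$ admits a continuous lift on a neighborhood of each point of $I$, then it admits one on all of $I$. On a compact subinterval this follows from a partition $a=s_0<\dots<s_M=b$ fine enough that $\gamma$ lifts on each $[s_{i-1},s_i]$ (Lebesgue number lemma), glued successively: two lifts agree at a shared endpoint $s_i$ as multisets, so they differ there by a fixed permutation $\tau\in S_N$, and relabeling the later lift by $\tau$ lets the pieces join continuously. For an arbitrary interval one exhausts by nested compact subintervals, extending each lift outward from the previously constructed piece.

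The local statement I would prove by induction on $N$, the case $N=1$ being trivial. Fix $\alpha_0\in I$ and let $\mu_1,\dots,\mu_m$ be the distinct entries of $\Lambda(\alpha_0)$, with multiplicities $n_1,\dots,n_m$. Choosing pairwise disjoint closed discs $D_j\ni\mu_j$ and invoking continuity of the roots (equivalently, Rouch\'e's theorem on each $\partial D_j$), there is a neighborhood $U$ of $\alpha_0$ on which exactly $n_j$ entries of $\Lambda(\alpha)$ lie in $D_j$ for each $j$; collecting these entries exhibits $\gamma|_U$ as a disjoint union of continuous sub-tuples $\gamma^{(1)},\dots,\gamma^{(m)}$ of sizes $n_1,\dots,n_m$ with $\gamma^{(j)}(\alpha_0)=\{\mu_j,\dots,\mu_j\}$. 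If $m\ge2$, then every $n_j<N$, so the induction hypothesis lifts each $\gamma^{(j)}$ on a subinterval of $U$ containing $\alpha_0$, and concatenating these yields a continuous lift of $\gamma$ near $\alpha_0$.

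There remains the delicate case $m=1$, that is $\Lambda(\alpha_0)=\{\mu,\dots,\mu\}$, where the splitting is vacuous. Here I would recenter: $h(\alpha)$, the arithmetic mean of the entries of $\Lambda(\alpha)$, is a scalar multiple of a coefficient of the attached polynomial, hence is continuous in $\alpha$, and subtracting $h(\alpha)$ from every entry replaces $\gamma$ by a continuous path all of whose tuples have entry-sum $0$ and whose value at $\alpha_0$ is $\{0,\dots,0\}$; a lift of this path, plus $(h,\dots,h)$, lifts $\gamma$. So assume the entry-sums vanish and $\gamma(\alpha_0)=\{0,\dots,0\}$, and let $Z\coloneqq\{\alpha:\gamma(\alpha)=\{0,\dots,0\}\}$, a closed set. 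Work on a small interval $U\ni\alpha_0$ and define the lift to be identically $0$ on $U\cap Z$. An entry-sum of $0$ precludes a nonzero repeated value, so every point of $U\setminus Z$ has at least two distinct entries; hence the gluing argument of the first paragraph, applied over each connected component of $U\setminus Z$ and invoking the induction hypothesis through the $m\ge2$ case, provides a continuous lift on that component. At any finite endpoint $c$ of such a component, $c\in Z$ and $\gamma(\alpha)\to\{0,\dots,0\}$ as $\alpha\to c$; since a basic neighborhood of $\{0,\dots,0\}$ in $\mathbb{C}^N/S_N$ consists of the multisets all of whose entries are small, every entry of the lift tends to $0$, so it extends continuously by $0$ at $c$. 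The same observation makes the map assembled on $U$ continuous at every point of $Z$, including those accumulated by components of $U\setminus Z$. This closes the induction and, combined with the first paragraph, the theorem follows.

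I expect the case $m=1$ to be the main obstacle: the degree cannot be lowered by peeling off coincident values, so one must recenter and decompose the parameter interval relative to the set $Z$ on which all entries vanish, and the single point requiring genuine care is checking that the piecewise-defined lift on $U$ is globally continuous --- in particular at points of $Z$ accumulated by components of $U\setminus Z$ --- rather than merely continuous on $Z$ and on each component in isolation. It is worth stressing that the statement fails over higher-dimensional parameter domains: the $2$-tuple $\alpha\mapsto\{\sqrt\alpha,-\sqrt\alpha\}$ admits no continuous selection on a disc about $0\in\mathbb{C}$, since monodromy interchanges the two branches. Thus the total order of the interval $I$ must be --- and, in the successive gluing, is --- used essentially, and no purely topological argument resting on the simple connectedness of $\mathbb{C}^N/S_N$ can substitute for it.
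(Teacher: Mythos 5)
The paper does not prove this theorem --- it is quoted directly from Kato's monograph \cite[Theorem~5.2]{k1995} --- so there is no argument of the authors' to compare against; you are supplying one, and what you have written is correct and self-contained. The architecture is sound: the Vieta identification of unordered $N$-tuples with $\mathbb{C}^N$, local-to-global gluing along the interval via relabeling at shared endpoints, induction on $N$ by splitting the multiset when not all entries coincide, and (the genuine crux, which you rightly single out) recentering by the mean and analysing the zero set $Z$ when they do. Two small points would merit a sentence in a polished version. First, the continuity of each sub-tuple $\gamma^{(j)}$ as a map into $\mathbb{C}^{n_j}/S_{n_j}$ is not automatic from continuity of $\gamma$; it holds because the coefficients of the factor $\prod_{z\in D_j}(t-z)$ of the attached monic polynomial $p_\alpha$ are continuous in $\alpha$, which follows from Newton's identities together with continuity of the power sums $\frac{1}{2\pi\ii}\oint_{\partial D_j} t^{k}\, p_\alpha'(t)/p_\alpha(t)\, dt$. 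Second, it is worth stating explicitly that the $m\ge 2$ subcase of the local lemma at level $N$ uses only the inductive hypothesis at levels $<N$, so invoking it inside the $m=1$ subcase at the same level $N$ is not circular; at present this is left implicit. Your closing observation is apt and worth keeping: $\pi\colon\mathbb{C}^N\to\mathbb{C}^N/S_N$ is a branched covering rather than a covering map, so no appeal to the simple connectedness of $\mathbb{C}^N/S_N\cong\mathbb{C}^N$ can substitute for the order structure of $I$, and the result genuinely fails over a two-dimensional parameter domain.
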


If $\lambda: [0,1] \longrightarrow \mathbb{C}$ and $\lambda$ is continuous, then $\lambda$ is called an \emph{arc} (or \emph{path}). Because the zeros of a polynomial depend continuously on its coefficients (see, e.g., Horn and Johnson \cite[Appendix D]{hj2013}), by \thref{kato}, there are, including repetitions, $N \coloneqq \deg \mathsf{P}_\alpha$ arcs $\lambda_k: [0,1] \longrightarrow \mathbb{C}$, $k \in \{1,\ldots,N \}$, such that $\mathsf{P}_\alpha(\lambda_k(\alpha)) = 0$, $\forall \alpha \in [0,1]$. 

With the above in mind, it is possible to give a precise definition of what is meant by a \emph{K-arc}. For $n \in \mathbb N$, let $\omega_n \coloneqq  \cos(2\pi/n) + \ii \sin(2\pi/n)$.

\begin{definition}
    Let $n \ge 4$, $\mathsf{X} \in \{ \mathsf{0},\mathsf{I},\mathsf{II}, \mathsf{III} \}$, and let $\frac{p}{q}$ and $\frac{r}{s}$ be Farey neighbors. If there is a continuous function $\lambda: [0,1] \longrightarrow \mathbb{C}$ such that $\lambda(0) = \omega_q^p$, $\lambda(1) = \omega_{s}^{r}$, $\mathsf{P}_\alpha^\mathsf{X} (\lambda(\alpha)) = 0, \forall \alpha \in [0,1]$, and  
    \[ \min \left\{ \frac{p}{q}, \frac{r}{s} \right\} \le \frac{\Arg \lambda(\alpha)}{2\pi} \le \max \left\{ \frac{p}{q}, \frac{r}{s} \right\}, \forall \alpha \in [0,1], \] 
    then 
    \[ K_n\left( \min \left\{ \frac{p}{q}, \frac{r}{s} \right\}, \max \left\{ \frac{p}{q}, \frac{r}{s} \right\} \right) \coloneqq \left\{ \lambda \in \mathbb{C} \mid \lambda = \lambda(\alpha),\ \alpha \in [0,1] \right\} \]
    is called the \emph{K-arc with respect to $\frac{p}{q}$ and $\frac{r}{s}$ (of order $n$)}.
\end{definition} 

Part \ref{secondstep} of \thref{bigprop} is trivial for the Type 0 polynomial (see Subsection \ref{subsect:typezero}). Although not the central aim of his work, {\DJ}okovi{\'c} \cite[pp.~175--181]{d1990} indirectly established part \ref{secondstep} of \thref{bigprop} for Type I polynomials by examining boundaries of so-called \emph{Farey tilings}. In an effort to sharpen the \karp~theorem, Kirkland et al.~\cite[Theorem 4.2]{kls2020} also indirectly established this result. In particular, given $\theta \in [0,2\pi)$, Kirkland et al~\cite[Theorem 1.2]{kls2020} gave an explicit description of the point on the boundary of $\Theta_n$ with argument $\theta$. However, the \karp~theorem is taken as their starting point; moreover, the modulus of this point is given as the zero of a separate polynomial. In both of the aforementioned works, the arguments are somewhat lengthy and involved. 

In what follows, a direct and elementary argument is given to establish part \ref{secondstep} of \thref{bigprop} for Type I polynomials (see Subsection \ref{subsect:typeone}). Although this is a special case, many Type II and Type III arcs of a given order are pointwise powers of certain Type I arcs (see Section \ref{sect:karcpowers}). Part \ref{secondstep} of \thref{bigprop} for Type II and Type III arcs that are not pointwise powers is left open.

If $\lambda \in \Theta_n$, then $\lambda$ is called \emph{extremal} if $\gamma \lambda \notin \Theta_n$, $\forall \gamma  > 1$. If $E_n$ denotes the collection of extremal numbers, then it is clear that $E_n \subseteq \partial\Theta_n$. \karp~\cite[p.~81]{ams140} asserted that $\partial \Theta_n = E_n$ because $\Theta_n$ is closed. The case when $n=1$ is trivial, but \karp's assertion is false when $n=2$ and $n=3$---indeed, notice that $\partial \Theta_2 \setminus E_2 = (-1,1)$ and $\partial\Theta_3 \setminus E_3 = (-1,-1/2]$ (see Figure \ref{fig:thetan}). In this work, it is also shown that $\partial\Theta_n \subseteq E_n$ when $n>3$ via elementary means.

\begin{figure}[H]
    \centering
        \begin{tikzpicture}
                \begin{axis}[
                    axis lines = none,
                    ticks = none,
                    axis equal image,
                    scale=0.75,
                    xmin=-1,
                    xmax=1,
                    ymin=-1.0,
                    ymax=1.0,
                    legend pos=outer north east
                    ]
        
                    \draw[color=gray] (axis cs:0,0) circle (1);
                    
                    \addplot[thick,dotted] coordinates{(-1,0) (1,0)};                              
                    \addlegendentry{$\Theta_2$};
                    
                    \addplot[thick,dashed] coordinates{
                    (1,0) 
                    (-.5,.866025403784439) 
                    (-.5,-.866025403784439)
                    (1,0)}; 
                    \addlegendentry{$\partial \Theta_3 \backslash \Theta_2$};
                \end{axis}
            \end{tikzpicture}
    \caption{The region $\Theta_2$ and the boundary of $\Theta_3$.}
    \label{fig:thetan}
\end{figure}
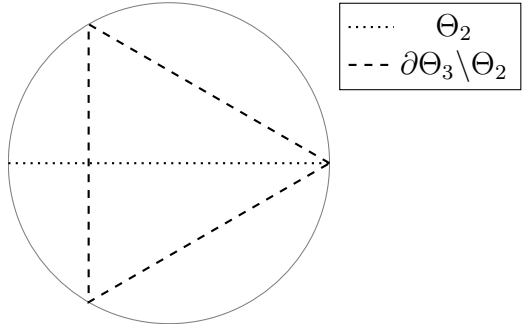
Thus, part \ref{thirdstep} of \thref{bigprop}, which is also left open, can be restated as follows: \emph{If $\lambda$ is a function satisfying the properties listed in part \ref{secondstep} of \thref{bigprop}, then $\lambda(\alpha) \in E_n$, $\forall \alpha \in [0,1]$.} 

\section{Notation \& Background}
\label{sec:notandback}

If $z$ is a nonzero complex number, then $\arg z \coloneqq  \{ \theta \in \mathbb{R} \mid z = r(\cos \theta + \ii\sin \theta) \}$ and $\Arg z \coloneqq  \{ \theta \in \arg z \mid \theta \in -(\pi,\pi] \}$. If $z_1,\ldots,z_m \in \mathbb{C}^n$, then the \emph{convex hull of $z_1,\ldots,z_m$}, denoted by $\conv(z_1,\ldots,z_m)$, is defined by 
\[ \conv(z_1,\ldots,z_m) = \left\{ \sum_{k=1}^n \alpha_k z_k \in \mathbb{C}^n \mid \sum_{k=1}^n \alpha_k = 1,\ \alpha_k \ge 0,\ 1 \le k \le m \right\}. \]

Given $n \in \mathbb{N}$, elements of the set $F_n \coloneqq  \{ p/q \mid 0\leq p \le q \leq n,\gcd(p,q) = 1 \}$ are called the \emph{Farey fractions of order n}. If $p/q$ and $r/s$ are elements of $F_n$ such that $p/q < r/s$, then $(p/q,r/s)$ is called a \emph{Farey pair (of order $n$)} if $x \not\in F_n$ whenever $p/q < x < r/s$. The Farey fractions $p/q$ and $r/s$ are called \emph{Farey neighbors} if $(p/q,r/s)$ or $(r/s, p/q)$ is a Farey pair. 

The following result is well-known (see, e.g., LeVeque \cite[Theorem 8.14]{l2002}). 

\begin{theorem} 
    \thlabel{leveque}
        If $p/q,r/s \in F_n$, then $p/q$ and $r/s$ are Farey neighbors of order $n$ if and only if $ps - qr = \pm 1$ and $q + s > n$.
\end{theorem}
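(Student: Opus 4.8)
The statement is the classical characterization of adjacent fractions in the Farey sequence, so the plan is to give the standard self-contained argument from elementary number theory---B\'ezout's identity together with the mediant inequality---rather than merely to appeal to a reference. Throughout I would assume without loss of generality that $p/q < r/s$, so that $ps - qr < 0$; the condition $ps - qr = \pm 1$ then amounts to $qr - ps = 1$, and the two fractions may be interchanged at will.

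For the implication assuming $qr - ps = 1$ and $q + s > n$, I would show that every fraction $a/b$ in lowest terms with $b \ge 1$ and $p/q < a/b < r/s$ satisfies $b > n$, hence $a/b \notin F_n$, so that $(p/q, r/s)$ is a Farey pair. The crux is that $aq - bp$ and $br - as$ are positive integers, whence $a/b - p/q \ge 1/(bq)$ and $r/s - a/b \ge 1/(bs)$; adding these and using $r/s - p/q = (qr - ps)/(qs) = 1/(qs)$ gives $1/(qs) \ge (q+s)/(bqs)$, i.e.\ $b \ge q + s > n$.

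For the converse, suppose $(p/q, r/s)$ is a Farey pair of order $n$. First, $q + s > n$: the mediant $(p+r)/(q+s)$ lies strictly between $p/q$ and $r/s$, and its reduced form is a fraction strictly between them with denominator at most $q + s$, so $q + s \le n$ would contradict the Farey-pair hypothesis. Next, since $\gcd(p,q) = 1$, B\'ezout produces integers $a, b$ with $qa - pb = 1$; adding a suitable multiple of $(p, q)$, I may take $n - q < b \le n$, and then $\gcd(a,b) = 1$, $b \ge 1$, and $a/b > p/q$, so $a/b \in F_n$. The Farey-pair hypothesis forces $a/b \ge r/s$. If $a/b > r/s$, then $as - br \ge 1$, and the same estimates give $1/(qb) = (a/b - r/s) + (r/s - p/q) \ge 1/(bs) + (qr - ps)/(qs) \ge 1/(bs) + 1/(qs)$, whence $s \ge q + b > n$, contradicting $s \le n$. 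Hence $a/b = r/s$; comparing lowest-terms representations yields $a = r$ and $b = s$, so $qr - ps = qa - pb = 1$.

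I do not anticipate a genuine obstacle---the content is classical. The only points requiring care are bookkeeping: verifying that the fractions produced (the mediant and the B\'ezout fraction $a/b$) are genuinely Farey fractions of order $n$---that is, reducing to lowest terms and checking that numerators and denominators stay in range---together with the boundary case $q = n$, where $n - q = 0$ and one must still confirm $b \ge 1$. If brevity were preferred over self-containment, the theorem could simply be quoted from LeVeque \cite{l2002}.
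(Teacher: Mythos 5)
The paper does not actually prove this result---it simply cites LeVeque \cite[Theorem 8.14]{l2002}---so there is no in-paper argument to compare against. Your self-contained proof is correct and is the standard elementary argument for the Farey adjacency criterion (B\'ezout plus the $1/(bq)$ gap estimate). The ``bookkeeping'' points you flag do all check out, but since you left them as promissory notes it is worth recording why: in the forward direction the reduced form of the mediant lies in $F_n$ because $p \le q$ and $r \le s$ force $p+r \le q+s$, so the mediant is in $[0,1]$ and its reduced denominator is at most $q+s \le n$; in the converse direction the B\'ezout fraction $a/b$ satisfies $a/b = p/q + 1/(qb)$, and since $p/q < r/s \le 1$ gives $q - p \ge 1$ while $b \ge 1$, one gets $a/b - 1 = 1/(qb) - (q-p)/q \le 0$, so $0 < a \le b \le n$ and $a/b \in F_n$ as needed; and the boundary case $q=n$ is harmless since $b > n - q = 0$ still yields $b \ge 1$. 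With those steps written out, the argument is complete and would serve as a fine self-contained replacement for the citation.
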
 

A \emph{directed graph} (or \emph{digraph}) $\Gamma = (V,E)$ consists of a finite, nonempty set $V$ of \emph{vertices}, together with a set $E \subseteq V^2$ of \emph{arcs}. For $A \in \mathsf{M}_{n}(\mathbb{C})$, the \emph{directed graph} (or \emph{digraph}) of $A$, denoted by $\Gamma(A)$, has vertices $V = \{ 1, \dots, n \}$ and arcs $E = \{ (i,j) \in V^2 \mid a_{ij} \neq 0\}$. 

A digraph $\Gamma$ is called \emph{strongly connected} if, for any two distinct vertices $i$ and $j$ of $\Gamma$, there is a directed walk in $\Gamma$ from $i$ to $j$ (following Brualdi and Ryser \cite{br1991}, every vertex of $V$ is considered strongly connected to itself). Let $k = k(\Gamma)$ be the greatest common divisor of the lengths of the closed directed walks of $\Gamma$ ($k$ is undefined when $n=1$ and $\Gamma$ does not contain a loop). A strongly connected digraph $\Gamma$ is called \emph{primitive} if $k = 1$ and \emph{imprimitive} if $k > 1$. 

If $n \geq 2$, then $A \in \mathsf{M}_{n}(\mathbb{C})$  is called \emph{reducible} if there is a permutation matrix $P$ such that
\begin{align*}
P^\top A P =
\begin{bmatrix}
A_{11} & A_{12} \\
0 & A_{22}
\end{bmatrix},
\end{align*}
where $A_{11}$ and $A_{22}$ are nonempty square matrices. If $A$ is not reducible, then A is called \emph{irreducible}. It is well-known that a matrix $A$ is irreducible if and only if $\Gamma(A)$ is strongly connected (see, e.g., Brualdi and Ryser \cite[Theorem 3.2.1]{br1991} or Horn and Johnson \cite[Theorem 6.2.24]{hj2013}). 

If $A$ is a nonnegative matrix of order $n$, then $A$ is primitive if and only if $\exists m \in \mathbb{N}$ such that $A^m > 0$. Furthermore, since $A$ is nonnegative and irreducible, it follows that $A^p > 0$, $\forall p \ge m$ (see, e.g., Brualdi and Ryser \cite[Theorem 3.4.4]{br1991} or Horn and Johnson \cite[Theorem 8.5.2]{hj2013}). If $A \in \mathsf{M}_n(\mathbb{R})$, then $A$ is called \emph{eventually positive} $\exists m \in \mathbb{N}$ such that $A^p > 0$, $\forall p \ge m$. It is known that $A$ is eventually positive if and only if the spectral radius of $A$ is a positive eigenvalue (of multiplicity 1) that strictly dominates all other eigenvalues and for which there is both a positive left and right eigenvector (see. e.g., Handelman \cite[Lemma 2.1]{h1981}, Johnson and Tarazaga \cite[Theorem 1]{jt2004}, or Noutsos \cite[Theorem 2.1]{n2006}). 

\section{Basic Results of \thref{karpito}}

\begin{proposition}
    \thlabel{realsymm}
        ``The region $\Theta_n$ is symmetric with respect to the real axis.''
\end{proposition}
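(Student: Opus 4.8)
The plan is to invoke the elementary fact that a real matrix has a characteristic polynomial with real coefficients, so its non-real spectrum is closed under complex conjugation. Concretely, recall first that $\Theta_n$ is the set of all $\lambda \in \mathbb{C}$ that occur as an eigenvalue of some $n$-by-$n$ stochastic matrix, and observe that ``symmetric with respect to the real axis'' means precisely $\Theta_n = \overline{\Theta_n}$, where $\overline{S} \coloneqq \{ \overline{z} \mid z \in S \}$. Since complex conjugation is an involution, it suffices to show the single inclusion $\overline{\Theta_n} \subseteq \Theta_n$, i.e., that $\lambda \in \Theta_n$ implies $\overline{\lambda} \in \Theta_n$.

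To carry this out, fix $\lambda \in \Theta_n$ and choose a stochastic matrix $A \in \mathsf{M}_n(\mathbb{R})$ with $\chi_A(\lambda) = 0$. Because every entry of $A$ is real, the coefficients of $\chi_A(t) = \det(tI - A)$ are real, so $\chi_A(\overline{\lambda}) = \overline{\chi_A(\lambda)} = \overline{0} = 0$; hence $\overline{\lambda}$ is an eigenvalue of the same stochastic matrix $A$, and therefore $\overline{\lambda} \in \Theta_n$. This gives $\overline{\Theta_n} \subseteq \Theta_n$, and applying the same argument to $\overline{\lambda}$ (or simply using that conjugation is an involution) yields the reverse inclusion, so $\Theta_n = \overline{\Theta_n}$, as desired.

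There is no real obstacle here: the statement follows immediately from realness of stochastic matrices and does not even use nonnegativity or the row-sum condition. The only point worth stating carefully is the reduction to a single conjugation-closed inclusion, after which the argument is a one-line consequence of $\chi_A \in \mathbb{R}[t]$.
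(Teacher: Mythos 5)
Your argument is correct and is exactly the paper's proof: the characteristic polynomial of a real matrix has real coefficients, so its nonreal zeros occur in conjugate pairs. You have simply spelled out the details the paper leaves implicit.
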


\begin{proof}
    This is the least controversial aspect of \thref{karpito} given that the characteristic polynomial of a real matrix has real coefficients and thus nonreal zeros occur in complex conjugate pairs.
\end{proof}

\begin{proposition}
    \thlabel{modlessthanone}
        ``The region $\Theta_n$ is included in the unit-disc $\{ z \in \mathbb{C} \mid |z| \leq 1\}$.''
\end{proposition}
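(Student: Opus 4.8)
The plan is to reduce the statement to the familiar fact that the spectral radius of a stochastic matrix equals $1$; since every eigenvalue is bounded in modulus by the spectral radius, this yields $\Theta_n \subseteq \{ z \in \mathbb{C} \mid |z| \le 1 \}$ at once. First I would observe that if $A \in \mathsf{M}_n(\mathbb{R})$ is stochastic, then $A\mathbf{1} = \mathbf{1}$, where $\mathbf{1} \coloneqq (1,\dots,1)^\top$, so $1$ is an eigenvalue of $A$ and hence $\rho(A) \ge 1$ (this half is not even needed for the inclusion, but it explains why the bound is sharp).

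For the upper bound I would use the maximum-modulus-coordinate argument, which is essentially Gershgorin's theorem specialized to this setting. Let $\lambda$ be an eigenvalue of $A$ with eigenvector $x \ne 0$, and choose an index $k$ with $|x_k| = \max_{1 \le i \le n} |x_i|$; since $x \ne 0$ we have $|x_k| > 0$. Comparing the $k$th entries of $Ax = \lambda x$ and invoking the nonnegativity of the entries of $A$ together with $\sum_{j=1}^n a_{kj} = 1$ gives
\[ |\lambda|\,|x_k| = \left| \sum_{j=1}^n a_{kj} x_j \right| \le \sum_{j=1}^n a_{kj} |x_j| \le \left( \sum_{j=1}^n a_{kj} \right) |x_k| = |x_k|, \]
and dividing through by $|x_k| > 0$ yields $|\lambda| \le 1$. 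Equivalently, one may cite that $\rho(A)$ is dominated by any induced matrix norm and note that $\|A\|_\infty = \max_{1 \le i \le n} \sum_{j=1}^n |a_{ij}| = 1$ for a stochastic matrix.

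Since the above applies to every eigenvalue of every $n$-by-$n$ stochastic matrix, the inclusion follows. I do not anticipate any genuine obstacle here; the only point deserving a word of care is that the maximizing index $k$ must be selected so that $|x_k| > 0$, which is possible precisely because the eigenvector $x$ is nonzero.
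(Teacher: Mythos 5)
Your proof is correct and takes essentially the same approach as the paper: the paper's proof simply cites $\rho(A) \le \lVert A \rVert_\infty = 1$, while your primary argument unpacks that norm inequality via the standard maximum-modulus-coordinate estimate, and you explicitly note the norm-based formulation as an equivalent route at the end. The observation that $1$ is always an eigenvalue (via $A\mathbf{1}=\mathbf{1}$), while not needed for the inclusion, parallels the paper's remark on the Perron--Frobenius viewpoint.
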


\begin{remark}
    A very elegant proof can be obtained by the Perron--Frobenius theorem as follows:  let $y$ be the left Perron vector normalized so that $y^\top e = 1$. By definition of the left Perron vector, $y^\top A = \rho(A) y^\top$ and because $Ae = e$, we have 
        $$1 = y^\top e = y^\top (Ae) = (y^\top A) e = (\rho(A) y^\top) e = \rho(A) (y^\top e) = \rho(A)\cdot 1 = \rho(A).$$  
        
    However, the following proof does not rely on nonnegativity. 
\end{remark}

\begin{proof}
    If $A \in \textsf{M}_n (\mathbb{C})$, then 
    $$\begin{Vmatrix} A \end{Vmatrix}_\infty \coloneqq  \max\limits_{1\le i \le n} \sum_{j=1}^n \vert a_{ij} \vert$$ 
    is a matrix norm \cite[Example 5.6.5]{hj2013}. If $A$ is stochastic, then $\begin{Vmatrix} A \end{Vmatrix}_\infty = 1$. Furthermore, if $\vert \vert \cdot \vert\vert$ a matrix norm, then $\rho(A) \le \begin{Vmatrix} A \end{Vmatrix}$ \cite[Theorem 5.6.9]{hj2013}. Thus, if $\lambda$ is an eigenvalue of a stochastic matrix, then $\vert \lambda \vert \le \rho(A) \le 1$.    
\end{proof}

\begin{proposition}
    ``The region $\Theta_n$ intersects the unit-circle $\{ z \in \mathbb{C} \mid |z| = 1\}$ at the points $\{ \omega_q^p \mid p/q \in F_n \}$.''
\end{proposition}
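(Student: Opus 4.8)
The plan is to prove the two inclusions separately; the forward one is routine and the content lies in the reverse one. For $\bigl\{\omega_q^p \mid p/q \in F_n\bigr\} \subseteq \Theta_n \cap \{z : |z| = 1\}$, fix $p/q \in F_n$, so $1 \le q \le n$, and let $C_q$ be the $q$-by-$q$ cyclic permutation matrix (the permutation matrix of the cycle $(1\,2\,\cdots\,q)$), whose characteristic polynomial is $t^q - 1$. Then $C_q$ is doubly stochastic and has $\omega_q^p$ as an eigenvalue, so the block-diagonal matrix $C_q \oplus I_{n-q}$ (read as $C_n$ when $q = n$) is an $n$-by-$n$ stochastic matrix with $\omega_q^p$ in its spectrum; since $|\omega_q^p| = 1$, this point lies in $\Theta_n \cap \{z : |z| = 1\}$.

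For the reverse inclusion, suppose $\lambda$ is an eigenvalue of a stochastic matrix $A \in \mathsf{M}_n(\mathbb{C})$ with $|\lambda| = 1$. First I would reduce to the irreducible case. Permuting $A$ to its Frobenius normal form (block upper-triangular with irreducible diagonal blocks) exhibits the spectrum of $A$ as the union of the spectra of the diagonal blocks $A_{ii}$. Each $A_{ii}$ is a principal submatrix of a stochastic matrix (a permutation of $A$), hence nonnegative with all row sums at most $1$, so $\rho(A_{ii}) \le \lVert A_{ii}\rVert_\infty \le 1$ as in the proof of \thref{modlessthanone}. Picking $i$ with $\lambda \in \operatorname{spec}(A_{ii})$ gives $1 = |\lambda| \le \rho(A_{ii}) \le 1$, so $\rho(A_{ii}) = 1$ and $\lambda$ is a peripheral eigenvalue of $A_{ii}$.

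The step I expect to require the most care — and the crux of the whole argument — is that such a diagonal block is in fact stochastic, not merely substochastic: letting $y > 0$ be a left Perron vector of the irreducible matrix $A_{ii}$, we have $y^\top A_{ii} e = y^\top e$ while $A_{ii} e \le e$ entrywise, and positivity of $y$ forces $A_{ii} e = e$. Thus $A_{ii}$ is an irreducible stochastic matrix of some order $m \le n$ for which $\lambda$ is peripheral. Now the Perron--Frobenius theorem for irreducible nonnegative matrices gives that the eigenvalues of $A_{ii}$ of maximal modulus are exactly $\omega_h^0, \omega_h^1, \ldots, \omega_h^{h-1}$, where $h$ is their number; and $h \le m \le n$ because $A_{ii}$ has only $m$ eigenvalues counted with multiplicity. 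Hence $\lambda = \omega_h^j$ for some $j$, and writing $j/h$ in lowest terms as $p/q$ we get $q \mid h$, so $q \le n$, $p/q \in F_n$, and $\lambda = \omega_q^p$, completing the proof.
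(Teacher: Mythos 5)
Your proof is correct. The paper itself offers no argument here---it simply attributes the result to Romanovsky (1936) and moves on---so your write-up genuinely supplies the content the paper delegates to a hard-to-find source. Both inclusions go through: the forward one via the cyclic permutation block $C_q \oplus I_{n-q}$, and the reverse one via the Frobenius normal form reduction to an irreducible diagonal block $A_{ii}$ carrying the peripheral eigenvalue $\lambda$. The step you single out as the crux is indeed the one that sketches of this fact tend to omit: a left Perron vector $y > 0$ for the irreducible block with $y^\top A_{ii} e = y^\top e$ and $A_{ii} e \le e$ entrywise forces $A_{ii} e = e$, so the block is genuinely stochastic and not merely substochastic. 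From there the Perron--Frobenius classification of the peripheral spectrum of an irreducible nonnegative matrix gives that the modulus-one eigenvalues of $A_{ii}$ are exactly the $h$-th roots of unity with $h \le m \le n$, hence $\lambda = \omega_h^j = \omega_q^p$ with $p/q \in F_n$ after reducing. One minor remark worth keeping in the final write-up: the case where the selected Frobenius block is the $1\times 1$ zero matrix is excluded automatically, since such a block has spectral radius $0$ rather than $1$, so the irreducible Perron--Frobenius machinery applies without caveat throughout.
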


\begin{proof}
    This result was initially established by Romanovsky \cite{r1936}.
\end{proof}

\section{K-arcs}

In this section, part \ref{secondstep} of \thref{bigprop} is established for Type 0 and Type I polynomials.
    
\subsection{Type 0}\label{subsect:typezero}

It is known that if $\lambda = a + \mathsf{i}b$ is an eigenvalue of a stochastic matrix, then $a + \vert{b}\vert \tan(\pi/n) \le 1$ (for references, see Laffey \cite[p.~85]{l1995}). If $\mathsf{P}_\alpha^\mathsf{0} (t) \coloneqq  (t-\alpha)^n-\beta^n$, with $\alpha \in [0,1]$ and $\beta \coloneqq  1 - \alpha$, then $\mathsf{P}_\alpha^\mathsf{0} (\beta\omega_n + \alpha) = 0$. Thus, $\partial\Theta_n \cap \{ z \in \mathbb{C} \mid 0 \le \Arg z \le 2\pi/n \} = \conv{(1,\omega_n)}$. 

\subsection{Type I}\label{subsect:typeone}

\begin{observation}
    If $n \ge 4$ and $\floor{n/q} = 1$, then $q > 2$.
\end{observation}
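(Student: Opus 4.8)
The plan is to unwind the definition of the floor function. The hypothesis $\floor{n/q} = 1$ is equivalent to the chain of inequalities $1 \le n/q < 2$; since $q$ is a positive integer, this in turn is equivalent to $n/2 < q \le n$. Indeed, if $q \le n/2$, then $n/q \ge 2$ and hence $\floor{n/q} \ge 2$, contradicting the hypothesis; and if $q > n$, then $n/q < 1$ and $\floor{n/q} = 0$. So the hypothesis pins down $q$ to the half-open range $(n/2, n]$.

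With this in hand the conclusion is immediate: combining $q > n/2$ with $n \ge 4$ gives $q > n/2 \ge 2$, and since $q$ is an integer this yields $q \ge 3$, i.e., $q > 2$. I expect no genuine obstacle here — the only point meriting a word of care is that the inequality $q > n/2$ is \emph{strict}, which is exactly what rules out the boundary case $q = n/2$ (possible when $n$ is even), for which $n/q = 2$ and $\floor{n/q} = 2 \ne 1$. Accordingly, the write-up will simply record the equivalence $\floor{n/q}=1 \iff n/2 < q \le n$ and then substitute $n \ge 4$.
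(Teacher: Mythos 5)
Your proof is correct and rests on the same observation as the paper's: $\floor{n/q}=1$ forces $q \le n < 2q$, which together with $n \ge 4$ rules out $q \le 2$. The only cosmetic difference is that the paper phrases this as a contradiction (assume $q \le 2$ and derive $n < 4$), while you argue directly; either framing is fine.
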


\begin{proof}
For contradiction, if $q \le 2$, then  
\[ \floor{n/q}  = 1 \implies 1 \le \frac{n}{q} < 2, \]
i.e., $q \le n < 2q \le 4$. Thus, $n < 4$, which yields the desired contradiction.
\end{proof}

In what follows, it is assumed that $n \ge 4$ and $p/q$ and $r/s$ are Farey neighbors such that $\floor{n/q} = 1$ and 
\[ 0 < \min \left\{ \frac{p}{q},\frac{r}{s} \right\} < \max \left\{ \frac{p}{q},\frac{r}{s} \right\}  < \frac{1}{2}. \]

We start with some preliminary observations: Since $ps - qr = \pm 1$, it follows that $\gcd(q,s) = 1$. Thus, for the Type I polynomial,
\begin{equation}
    \label{typeonepoly}
        \mathsf{P}_\alpha^\mathsf{I} (t) = t^{s} - \beta t^{s-q} - \alpha,\ \alpha \in [0,1],\ \beta \coloneqq  1 - \alpha,\ \gcd (q,s) = 1,\ 2 < q < s \le n. 
\end{equation}
If $\alpha = 0$, then \( \mathsf{P}_0^\mathsf{I}(t) = t^{s-q} (t^q - 1)\) and $\mathsf{P}_0^\mathsf{I}$ has zeros 
\[ \overbrace{0,\ldots,0}^{s-q}, 1, \omega_q,\ldots,\omega_q^{q-1}. \]
If $\alpha = 1$, then $\mathsf{P}_1^\mathsf{I}(t) = t^s - 1$ and $\mathsf{P}_1^\mathsf{I}$ has zeros \( 1,\omega_s,\ldots,\omega_s^{s-1} \).

\begin{lemma}[Euclid's Lemma]
    \thlabel{littlelemma}
        If $\gcd(a,b) = 1$ and $a \mid b c$, then $a \mid c$.
\end{lemma}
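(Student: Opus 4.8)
The plan is to reduce everything to Bézout's identity. Since $\gcd(a,b) = 1$, there are integers $x$ and $y$ with $ax + by = 1$; this is the one external fact I would invoke (it is standard, and if desired can itself be produced by the extended Euclidean algorithm or by strong induction on $a+b$). The first step is to multiply the Bézout relation through by $c$, which gives $c = acx + bcy$. The second step is to observe that $a$ divides the right-hand side termwise: $a \mid acx$ trivially, and $a \mid bcy$ because $a \mid bc$ by hypothesis. Since $a$ divides each of the two summands, it divides their sum, so $a \mid c$, as claimed.

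There is no genuine obstacle here: the statement is a textbook fact and the argument above is complete in two lines. The degenerate cases are harmless, since $\gcd(a,b) = 1$ forces $a \neq 0$ and the reasoning then goes through verbatim. The reason to isolate the statement as a named lemma is that it will be applied repeatedly in the analysis of the Type I polynomial---typically in the form ``$\gcd(q,s) = 1$ and $q \mid ks$ imply $q \mid k$''---so recording it once is a matter of convenience rather than of difficulty.
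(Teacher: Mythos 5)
Your Bézout argument is correct and complete; multiplying $ax+by=1$ by $c$ and noting $a$ divides both terms of $acx+bcy$ is the canonical proof. The paper itself states this lemma without proof (it is treated as a standard textbook fact), so there is no proof in the paper to compare against — your write-up simply supplies the routine justification the authors chose to omit.
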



\begin{theorem}[Forbidden rays]
    \thlabel{sqangle}
        Let $k \in \mathbb{Z}$, $\rho \in [0,\infty)$, and $\alpha \in (0,1)$. If $\Im \left(\omega_{2s}^k\right) \ne 0$, then $\mathsf{P}_\alpha^\mathsf{I} (\rho \omega_{2s}^k)  \neq 0$. Similarly, if $\Im\left(\omega_{2q}^k\right) \ne 0$, then $\mathsf{P}_\alpha^\mathsf{I} (\rho \omega_{2q}^k)  \neq 0$. 
\end{theorem}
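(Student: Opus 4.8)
The plan is to show that the "forbidden rays" — those emanating from the origin at angles that are odd multiples of $\pi/s$ or $\pi/q$ — cannot contain a zero of $\mathsf{P}_\alpha^\mathsf{I}$ for $\alpha \in (0,1)$, by a direct computation combined with a sign/parity obstruction. First I would substitute $t = \rho\omega_{2s}^k$ into $\mathsf{P}_\alpha^\mathsf{I}(t) = t^s - \beta t^{s-q} - \alpha$ and observe that $\omega_{2s}^{ks} = (\omega_{2s}^s)^k = (-1)^k$, so the leading term becomes real: $t^s = \rho^s(-1)^k$. Hence $\mathsf{P}_\alpha^\mathsf{I}(\rho\omega_{2s}^k) = (-1)^k\rho^s - \alpha - \beta\rho^{s-q}\omega_{2s}^{k(s-q)}$. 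For this to vanish, the imaginary part must vanish: since $\alpha,\beta>0$ and $(-1)^k\rho^s$ is real, we need $\Im\bigl(\omega_{2s}^{k(s-q)}\bigr) = 0$, i.e. $2s \mid k(s-q)$, equivalently $s \mid k(s-q)\cdot\tfrac{1}{?}$ — more precisely $\omega_{2s}^{k(s-q)} = \pm 1$ forces $s \mid k(s-q)$ when the sign is $+1$ and $2s \mid k(s-q)$ — wait, let me be careful: $\omega_{2s}^m \in \mathbb{R}$ iff $s \mid m$, and then $\omega_{2s}^m = (-1)^{m/s}$. So vanishing of the imaginary part forces $s \mid k(s-q)$. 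Since $\gcd(q,s)=1$ we have $\gcd(s, s-q) = \gcd(s,q) = 1$, so by \thref{littlelemma} (Euclid's Lemma), $s \mid k$. But $s \mid k$ means $\omega_{2s}^k = (-1)^{k/s} = \pm 1$, so $\Im(\omega_{2s}^k) = 0$, contradicting the hypothesis. The argument for $\omega_{2q}^k$ is entirely analogous, using $\gcd(q, s-q) = \gcd(q,s) = 1$ and noting $t^{s-q} = (\rho\omega_{2q}^k)^{s-q}$ has $\omega_{2q}^{k(s-q)}$ real iff $q \mid k(s-q)$ iff $q \mid k$.

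Actually, for the $\omega_{2q}^k$ case I should set it up symmetrically: write $t = \rho\omega_{2q}^k$, so $t^{s-q} = \rho^{s-q}\omega_{2q}^{k(s-q)}$ and $t^s = \rho^s \omega_{2q}^{ks} = \rho^s\omega_{2q}^{k(s-q)}\omega_{2q}^{kq} = \rho^s\omega_{2q}^{k(s-q)}(-1)^k$. Thus $\mathsf{P}_\alpha^\mathsf{I}(\rho\omega_{2q}^k) = \omega_{2q}^{k(s-q)}\bigl((-1)^k\rho^s - \beta\rho^{s-q}\bigr) - \alpha$. For this to be zero we need $\omega_{2q}^{k(s-q)}\bigl((-1)^k\rho^s - \beta\rho^{s-q}\bigr) = \alpha \in \mathbb{R}$, and since $\alpha \neq 0$ the factor $(-1)^k\rho^s - \beta\rho^{s-q}$ is a nonzero real, forcing $\omega_{2q}^{k(s-q)} \in \mathbb{R}$, hence $q \mid k(s-q)$, hence (Euclid) $q \mid k$, hence $\Im(\omega_{2q}^k) = 0$ — contradiction.

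I do not anticipate a serious obstacle here; the result is essentially a parity/divisibility bookkeeping exercise once the substitution is made, and the only subtlety is correctly tracking that $\omega_{2m}^j \in \mathbb{R} \iff m \mid j$ (with value $(-1)^{j/m}$) and invoking $\gcd(q,s)=1$ in the two slightly different guises $\gcd(s, s-q)=1$ and $\gcd(q, s-q)=1$. The mild point worth spelling out is why $(-1)^k\rho^s - \alpha - \beta\rho^{s-q} \cdot (\pm 1)$ (in the $2s$ case, after the imaginary part already forces reality) or the corresponding quantity in the $2q$ case cannot itself accidentally vanish — but we never need that, because we derive the contradiction purely from the imaginary part before examining the real part, so the real equation is irrelevant. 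One should double-check the edge cases $\rho = 0$ (then $\mathsf{P}_\alpha^\mathsf{I}(0) = -\alpha \neq 0$, fine) and small $k$, but these fall out of the same computation. I would present the $2s$ case in full and remark that the $2q$ case follows mutatis mutandis.
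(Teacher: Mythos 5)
Your proof is correct and takes essentially the same approach as the paper: substitute the forbidden-ray parametrization, use $\omega_{2m}^m=-1$, force the imaginary part to vanish, derive a divisibility condition, and apply Euclid's lemma with $\gcd(q,s)=1$ to reach a contradiction. The only differences are cosmetic---you carry the factor $\omega^{k(s-q)}$ where the paper carries $\omega_{2s}^{-kq}$ and $\omega_{2q}^{ks}$, and in the $2q$ case you sidestep the paper's separate check on whether $\rho^q\pm\beta$ vanishes by noting directly that $\alpha\neq 0$ forces the real coefficient multiplying $\omega_{2q}^{k(s-q)}$ to be nonzero.
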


\begin{proof}
Since $\omega_{2s}^k$ is a zero of $t^{2s} - 1 = (t^s + 1)(t^s - 1)$, it follows that $(\omega_{2s}^k)^s = \omega_{2s}^{ks} =\pm 1$. For contradiction, if 
\begin{align*}
    0 = \mathsf{P}_\alpha^\mathsf{I} (\rho \omega_{2s}^k)  
    &= \rho^s \omega_{2s}^{ks} - \beta \left(\rho^{s-q} \omega_{2s}^{ks-kq} \right) - \alpha                                                            \\
    &= \pm \rho^s \mp \beta \rho^{s-q} \omega_{2s}^{-kq} - \alpha                                                                                       \\
    &= \left[ \pm \rho^s \mp \beta \rho^{s-q}\cos (\frac{-\pi kq}{s}) - \alpha \right] + \ii\left[\mp \beta \rho^{s-q}\sin(\frac{-\pi k q}{s}) \right]  \\
    &= \left[ \pm \rho^s \mp \beta \rho^{s-q}\cos (\frac{\pi kq}{s}) - \alpha \right] + \ii\left[\pm\beta \rho^{s-q}\sin(\frac{\pi k q}{s}) \right],
\end{align*}
then
\begin{equation}
    \pm \rho^s \mp \beta \rho^{s-q}\cos (\frac{\pi kq}{s}) - \alpha = 0 \label{realpart}    
\end{equation}
and 
\begin{equation}
    \beta \rho^{s-q}\sin(\frac{\pi k q}{s}) = 0. \label{imagpart}
\end{equation}
If $\rho = 0$, then, by \eqref{realpart}, $\alpha = 0$, a contradiction. If $\rho \ne 0$, then, by \eqref{imagpart}  
    \[ \sin(\frac{\pi k q}{s}) = 0. \]
Thus, $s \mid k q$ and since $\gcd(q,s) = 1$, by \thref{littlelemma}, it follows that $s \mid k$. Consequently,  
\begin{align*}
    \Im \left( \omega_{2s}^k \right) = \sin(\frac{\pi k}{s}) = 0,
\end{align*}
a contradiction.
 
Since $\omega_{2q}^k$ is a zero of $t^{2q} - 1 = (t^q + 1)(t^q - 1)$, it follows that $(\omega_{2q}^k)^q = \pm 1$ and $(\omega_{2q}^k)^{-q} = \pm 1$. For contradiction, if  
\begin{align*}
0 = \mathsf{P}_\alpha^\mathsf{I} (\rho \omega_{2s}^k) 
&= (\rho \omega_{2q}^k)^s - \beta (\rho \omega_{2q}^k)^{s-q} - \alpha                                                                                       \\
&= \rho^s \omega_{2q}^{ks} \pm \beta \rho^{s-q} \omega_{2q}^{ks} - \alpha                                                                                   \\
&= \left[\rho^{s-q} (\rho^q \pm \beta)\cos (\frac{\pi k s}{q}) - \alpha \right] + \ii\left[ \rho^{s-q}(\rho^q \pm \beta) \sin( \frac{\pi k s}{q}) \right],
\end{align*}
then,  
\begin{equation}
    \label{realpart2}
        \rho^{s-q} (\rho^q \pm \beta)\cos (\frac{\pi k s}{q}) - \alpha = 0
\end{equation}
and
\begin{equation}
    \label{imagpart2}
        \rho^{s-q}(\rho^q \pm \beta) \sin( \frac{\pi k s}{q}) = 0
\end{equation}
If $\rho = 0$ or $\rho^q \pm \beta = 0$, then, by \eqref{realpart2}, $\alpha = 0$ a contradiction. If $\rho\ne 0$ and $\rho^q \pm \beta \ne 0$, then 
\[ \sin (\frac{\pi k s}{q}) = 0. \]
by \eqref{imagpart2}. Thus, $q \mid ks$ and since $\gcd(q,s) = 1$, by \thref{littlelemma}, it follows that $q \mid k$. Consequently,  
\begin{align*}
	\Im \left( \omega_{2q}^k \right) = \sin(\frac{\pi k}{q}) = 0,
\end{align*}
a contradiction.
\end{proof}

\begin{observation}
    \thlabel{obs:farey}
        \begin{enumerate}
            [label=\roman*)]
            \item If \(\frac{p}{q} < \frac{r}{s}\), then 
            \[ \frac{r-1}{s} < \frac{2r-1}{2s} <  \frac{p}{q} < \frac{r}{s} < \frac{2r+1}{2s} < \frac{r+1}{s} \Longleftrightarrow q > 2. \]
        
            \item If \(\frac{r}{s} < \frac{p}{q}\), then 
            \[ \frac{r-1}{s} < \frac{2r-1}{2s} <  \frac{r}{s} < \frac{p}{q} < \frac{2r+1}{2s} < \frac{r+1}{s} \Longleftrightarrow q > 2. \]
        \end{enumerate}
\end{observation}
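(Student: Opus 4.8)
The plan is to notice that in each of the two displayed chains all but one of the inequalities hold unconditionally — for any positive integers $r,s$ — so that the stated biconditional collapses to a single cross-multiplication governed by the Farey-neighbor identity. Throughout I would invoke \thref{leveque} to write $ps-qr = \pm 1$, with the sign pinned down by the hypothesis on the order of $\tfrac{p}{q}$ and $\tfrac{r}{s}$.

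For part i), assume $\tfrac{p}{q} < \tfrac{r}{s}$. First I would dispatch the ``free'' inequalities $\tfrac{r-1}{s} < \tfrac{2r-1}{2s}$, $\tfrac{r}{s} < \tfrac{2r+1}{2s}$, and $\tfrac{2r+1}{2s} < \tfrac{r+1}{s}$, each of which reduces, after clearing the common denominator $2s$, to one of $2r-2 < 2r-1$, $2r < 2r+1$, $2r+1 < 2r+2$; the remaining link $\tfrac{p}{q} < \tfrac{r}{s}$ is the hypothesis. Thus the entire chain is equivalent to the single inequality $\tfrac{2r-1}{2s} < \tfrac{p}{q}$. Here I would use that $\tfrac{p}{q} < \tfrac{r}{s}$ forces $ps - qr < 0$, hence $ps - qr = -1$, i.e., $qr = ps+1$; multiplying $\tfrac{2r-1}{2s} < \tfrac{p}{q}$ by $2sq > 0$ and substituting turns it into $2(ps+1) - q < 2ps$, that is, $q > 2$. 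This is exactly the asserted biconditional.

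Part ii) is the mirror image: assuming $\tfrac{r}{s} < \tfrac{p}{q}$, the inequalities $\tfrac{r-1}{s} < \tfrac{2r-1}{2s} < \tfrac{r}{s}$ and $\tfrac{2r+1}{2s} < \tfrac{r+1}{s}$ are again automatic, $\tfrac{r}{s} < \tfrac{p}{q}$ is the hypothesis, and the chain reduces to $\tfrac{p}{q} < \tfrac{2r+1}{2s}$; now $\tfrac{r}{s} < \tfrac{p}{q}$ gives $ps - qr = +1$, and the same manipulation ($2sp < 2qr + q$, then substitute $ps = qr+1$) yields $q > 2$. I do not anticipate a genuine obstacle here — the only point requiring care is tracking the sign of $ps - qr$ correctly in the two cases, which is precisely why the observation is split into parts i) and ii).
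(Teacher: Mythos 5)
Your proposal is correct and follows essentially the same route as the paper: isolate the one nontrivial inequality in each chain ($\tfrac{2r-1}{2s} < \tfrac{p}{q}$ in part i), $\tfrac{p}{q} < \tfrac{2r+1}{2s}$ in part ii)), then clear denominators and use $qr-ps=\pm 1$ to reduce to $q>2$. The only cosmetic difference is that you substitute $qr = ps+1$ (resp.\ $ps = qr+1$) before simplifying, whereas the paper keeps $qr-ps$ as a unit and writes the equivalence as $2(qr-ps) < q$; the underlying algebra is identical.
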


\begin{proof} 
    \begin{enumerate}
        [label=\roman*)]
        \item In this case, $qr-ps=1$ and it suffices to show that 
            \[\frac{2r-1}{2s} < \frac{p}{q} \]
            given that the inequalities 
            \[ \frac{r-1}{2s} < \frac{2r-1}{2s} \]
            and 
            \[ \frac{r}{s} < \frac{2r+1}{2s} < \frac{r+1}{s} \]
            are obvious; to this end, notice that
            \[\frac{2r-1}{2s} < \frac{p}{q} \Longleftrightarrow 2(qr-ps) < q \Longleftrightarrow 2 < q. \]
    
        \item \( \frac{r}{s} < \frac{p}{q} \). In this case, $ps - qr = 1$ and it suffices to show that 
            \[ \frac{p}{q} < \frac{2r+1}{2s} \]
            given that the inequalities 
            \[ \frac{r-1}{2s} < \frac{2r-1}{2s} < \frac{r}{s} \]
            and 
            \[ \frac{2r+1}{2s} < \frac{r+1}{s} \]
            are obvious; to this end, notice that
            \[ \frac{p}{q} < \frac{2r+1}{2s}  \Longleftrightarrow 2(ps - qr) < q \Longleftrightarrow 2 < q. \qedhere \]
    \end{enumerate}
\end{proof}

\begin{observation}
    \thlabel{obs:farey2}
        \begin{enumerate}[label=\roman*)]
            \item If $p/q < r/s$, then 
                \[ \frac{r}{s} < \frac{2p+1}{2q} \Longleftrightarrow 2 < s. \]
    
            \item If $r/s < p/q$, then
                \[ \frac{2p-1}{2q} < \frac{r}{s} \Longleftrightarrow 2 < s. \]
        \end{enumerate}
\end{observation}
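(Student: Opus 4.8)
The plan is to follow the template of \thref{obs:farey}: each equivalence reduces to a single cross-multiplication once the sign of the relevant $2\times 2$ determinant is fixed, so I do not anticipate any genuine obstacle.

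First I would record the determinant identities forced by the ordering hypotheses. Because $p/q$ and $r/s$ are Farey neighbors, $ps - qr = \pm 1$; the sign is pinned down by which fraction is larger. In part i), $\frac{r}{s} - \frac{p}{q} = \frac{qr - ps}{qs} > 0$ and $qs > 0$, so $qr - ps = 1$. In part ii), $\frac{p}{q} - \frac{r}{s} = \frac{ps - qr}{qs} > 0$, so $ps - qr = 1$.

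Next I would clear denominators in the target inequality, noting that $q$, $s$, and $2q$ are positive so the direction of the inequality is preserved. In part i),
\[
\frac{r}{s} < \frac{2p+1}{2q}
\;\Longleftrightarrow\; 2qr < 2ps + s
\;\Longleftrightarrow\; 2(qr - ps) < s
\;\Longleftrightarrow\; 2 < s,
\]
the last step using $qr - ps = 1$. In part ii),
\[
\frac{2p-1}{2q} < \frac{r}{s}
\;\Longleftrightarrow\; 2ps - s < 2qr
\;\Longleftrightarrow\; 2(ps - qr) < s
\;\Longleftrightarrow\; 2 < s,
\]
the last step using $ps - qr = 1$.

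The only points requiring care are bookkeeping ones: selecting the correct determinant sign in each case and cross-multiplying only by positive quantities. There is no substantive difficulty here; indeed, the statement is precisely the analogue of \thref{obs:farey} with the roles of $q$ and $s$ interchanged, and it will be used in the same way (to locate $r/s$ relative to the midpoints $\frac{2p\pm 1}{2q}$) when constructing the Type I K-arc.
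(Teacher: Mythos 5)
Your proof is correct and follows the same route as the paper: fix the sign of $ps - qr$ from the ordering hypothesis, then cross-multiply by the positive quantity $2qs$ to reduce the inequality to $2(qr-ps) < s$ (resp.\ $2(ps-qr) < s$). The only difference is that you spell out the intermediate cross-multiplication step, which the paper elides.
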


\begin{proof} 
    \begin{enumerate}
        [label=\roman*)]
        \item In this case, $qr - ps = 1$ and 
            \[ \frac{r}{s} < \frac{2p+1}{2q} \Longleftrightarrow 2(qr - ps) < s \Longleftrightarrow 2 < s. \]
    
        \item In this case, $ps - qr = 1$ and 
            \[ \frac{2p-1}{2q} < \frac{r}{s} \Longleftrightarrow 2(ps - qr) < s\Longleftrightarrow 2 < s. \qedhere \]
    \end{enumerate}
\end{proof}

\begin{theorem}
    \thlabel{mainresult}
        If $\mathsf{P}_\alpha^\mathsf{I}$ is the polynomial defined in \eqref{type_one_ito}, then there is a continuous function $\lambda: [0,1] \longrightarrow \mathbb{C}$ such that $\lambda(0) = \omega_q^p$, $\lambda(1) = \omega_{s}^{r}$, and $\mathsf{P}_\alpha^\mathsf{I} (\lambda(\alpha)) = 0, \forall \alpha \in [0,1]$. Furthermore, 
        \[ \min \left\{ \frac{p}{q}, \frac{r}{s} \right\} \le \frac{\Arg \lambda(\alpha)}{2\pi} \le \max \left\{ \frac{p}{q}, \frac{r}{s} \right\}, \forall \alpha \in (0,1). \]   
\end{theorem}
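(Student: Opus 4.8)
The plan is to realize the required function $\lambda$ as a Kato branch of zeros of $\mathsf{P}_\alpha^{\mathsf{I}}$ and then trap its argument between the forbidden rays of \thref{sqangle}. I would treat the case $p/q<r/s$ (so that $qr-ps=1$); the case $r/s<p/q$ is symmetric, with a single sign reversed. Write $a:=2\pi p/q$ and $b:=2\pi r/s$, so $0<a<b<\pi$. By \thref{kato}, since the zeros of a polynomial depend continuously on its coefficients, there are continuous $\lambda_1,\dots,\lambda_s:[0,1]\to\mathbb{C}$ whose values are the zeros of $\mathsf{P}_\alpha^{\mathsf{I}}$; since $\mathsf{P}_0^{\mathsf{I}}(t)=t^{s-q}(t^q-1)$ has $\omega_q^p$ as a \emph{simple} zero, exactly one branch, call it $\lambda$, has $\lambda(0)=\omega_q^p$. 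As $\mathsf{P}_\alpha^{\mathsf{I}}(0)=-\alpha\neq 0$ for $\alpha\in(0,1]$, the branch $\lambda$ is nonvanishing on $[0,1]$, so it has a continuous argument lift $\vartheta:[0,1]\to\mathbb{R}$ with $\lambda(\alpha)=|\lambda(\alpha)|e^{\ii\vartheta(\alpha)}$ and $\vartheta(0)=a$. It then suffices to prove $\vartheta(\alpha)\in(a,b)$ for $\alpha\in(0,1)$ and $\lambda(1)=\omega_s^r$.

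Two observations drive the proof. \textbf{(Barriers.)} Since $\gcd(p,q)=1$ and $q>2$ we have $\sin a=\sin(2\pi p/q)\neq 0$, so by \thref{sqangle} no zero of $\mathsf{P}_\alpha^{\mathsf{I}}$ lies on the ray $\Arg z=a$ when $\alpha\in(0,1)$; similarly $\sin b\neq 0$ (as $\gcd(r,s)=1$, $s>2$) excludes zeros on $\Arg z=b$. Hence $\vartheta(\alpha)\notin a+2\pi\mathbb{Z}$ and $\vartheta(\alpha)\notin b+2\pi\mathbb{Z}$ for $\alpha\in(0,1)$; moreover $\vartheta(1)\notin a+2\pi\mathbb{Z}$, since $\lambda(1)$ is an $s$-th root of unity whereas $\omega_q^p$ is not --- if $q\mid ps$ then $q\mid s$ by \thref{littlelemma}, contradicting $\gcd(q,s)=1$ and $q>2$. \textbf{(Initial direction.)} Setting $\lambda(\alpha)=\omega_q^p\mu(\alpha)$ and using $\omega_q^{ps}=\omega_q^{-1}=\omega_q^{p(s-q)}$ (which follows from $qr-ps=1$), the relation $\mathsf{P}_\alpha^{\mathsf{I}}(\omega_q^p\mu)=0$ is equivalent to $\mu^s-(1-\alpha)\mu^{s-q}-\alpha\omega_q=0$; the implicit function theorem at $(\mu,\alpha)=(1,0)$, where the $\mu$-derivative equals $q\neq 0$, yields $\mu'(0)=-(1-\omega_q)/q$ and hence $\Im\mu(\alpha)=\big(\sin(2\pi/q)/q\big)\alpha+o(\alpha)>0$ for small $\alpha>0$ (because $0<2\pi/q<\pi$). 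By uniqueness of the branch through the simple zero $\omega_q^p$, this local solution agrees with $\lambda$ near $0$, so $\vartheta(\alpha)>a$ for all sufficiently small $\alpha>0$. (Alternatively, \thref{obs:farey} and \thref{obs:farey2} pin down the forbidden rays immediately flanking the open sector $a<\Arg z<b$, giving another route to the direction.)

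The confinement now follows by a winding argument. Since $\vartheta$ is continuous, $\vartheta(0)=a$, $\vartheta>a$ near $0$, and $\vartheta$ omits $a+2\pi\mathbb{Z}$ on $(0,1]$, an infimum argument shows that $\vartheta$ cannot cross the levels $a$ or $a+2\pi$, so $\vartheta(\alpha)\in(a,a+2\pi)$ for all $\alpha\in(0,1]$. Because $a<b<a+2\pi$, the only member of $b+2\pi\mathbb{Z}$ in $(a,a+2\pi)$ is $b$; thus for $\alpha\in(0,1)$ we have $\vartheta(\alpha)\in(a,b)\cup(b,a+2\pi)$, and since $\vartheta\big((0,1)\big)$ is connected and lies just above $a$ for small $\alpha$, it is contained in $(a,b)$. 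As $(a,b)\subseteq(0,\pi)$, this gives $\Arg\lambda(\alpha)=\vartheta(\alpha)$ and $p/q<\frac{\Arg\lambda(\alpha)}{2\pi}<r/s$ on $(0,1)$, which implies the asserted estimate. Finally $\vartheta(1)\in[a,b]$ by continuity, and $\lambda(1)$ is a zero of $\mathsf{P}_1^{\mathsf{I}}(t)=t^s-1$, so $\vartheta(1)/2\pi$ is a rational with denominator dividing $s\le n$, hence an element of $F_n$ lying in $[p/q,r/s]$; as $(p/q,r/s)$ is a Farey pair it equals $p/q$ or $r/s$, and it is not $p/q$ (again by \thref{littlelemma}, since $q\mid ps$ would force $q\mid s$), so $\vartheta(1)/2\pi=r/s$ and $\lambda(1)=\omega_s^r$.

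The main obstacle is the middle step: showing that the branch departs \emph{into} the sector and then cannot escape it. The ``cannot escape'' part is the winding argument above --- routine but requiring care, because $\Arg$ is only locally a continuous function of $\lambda(\alpha)$ and because $\lambda$ sits on a forbidden ray at \emph{both} endpoints $\alpha=0$ and $\alpha=1$, precisely where \thref{sqangle} is silent. The ``departs into'' part requires either the short implicit-function computation or the Farey bookkeeping of \thref{obs:farey}--\thref{obs:farey2}; the remaining ingredients (the Kato branch, and the identification of $\lambda(1)$ via $t^s-1$ together with the Farey-pair property) are straightforward.
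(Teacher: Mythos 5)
Your proposal is correct and uses the same underlying skeleton as the paper (a Kato branch through $\omega_q^p$, trapped by the forbidden rays of \thref{sqangle}), but it supplies substantially more rigor than the paper's one-paragraph proof and introduces one genuinely new ingredient.

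Where you differ: the paper disposes of the entire confinement step by citing \thref{sqangle}, \thref{obs:farey}, and \thref{obs:farey2} and gesturing at Figure \ref{fig:schematics}; it never addresses how the branch leaves the forbidden ray through $\omega_q^p$ (where \thref{sqangle} is silent at $\alpha=0$), never makes the trapping argument precise, and never explains why the terminal value must be $\omega_s^r$ rather than some other $s$-th root of unity. You handle all three. (1) \emph{Initial direction.} Your substitution $t=\omega_q^p\mu$, the reduction to $\mu^s-(1-\alpha)\mu^{s-q}-\alpha\omega_q=0$, and the implicit-function computation $\mu'(0)=-(1-\omega_q)/q$ with $\Im\mu'(0)=\sin(2\pi/q)/q>0$ is a clean analytic replacement for the paper's appeal to \thref{obs:farey}--\thref{obs:farey2}; it directly certifies that $\vartheta(\alpha)>a$ for small $\alpha>0$, and you correctly note the Farey bookkeeping gives an alternative. (2) \emph{Confinement.} Your nonvanishing observation ($\mathsf{P}_\alpha^{\mathsf{I}}(0)=-\alpha$), the continuous argument lift $\vartheta$, and the intermediate-value trapping of $\vartheta$ in $(a,a+2\pi)$ and then in $(a,b)$ turn the paper's ``otherwise it would cross a forbidden ray'' into an actual proof; your remark that $\lambda$ sits \emph{on} forbidden rays at both endpoints, precisely where \thref{sqangle} gives nothing, identifies exactly why this care is needed. (3) \emph{Endpoint.} Your identification of $\lambda(1)=\omega_s^r$ via the Farey-pair property (an $s$-th root of unity with argument in $(p/q,r/s]/1$, but $F_n$ has no member strictly between $p/q$ and $r/s$) is explicit where the paper is terse, and your use of \thref{littlelemma} to rule out $\vartheta(1)\equiv a\pmod{2\pi}$ is a small point the paper omits entirely.

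One caution: your implicit-function computation uses $\omega_q^{ps}=\omega_q^{-1}$, which rests on $qr-ps=1$, i.e.\ on the case $p/q<r/s$. For the mirror case $ps-qr=1$ you get $\omega_q^{ps}=\omega_q$ instead, and the sign of $\Im\mu'(0)$ flips, so the branch departs \emph{below} $a$ (toward $b<a$), which is what you want; it is worth stating that the ``single sign reversed'' you allude to is exactly this one. With that noted, the argument closes cleanly and is a faithful, more careful version of the paper's proof.
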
  

\begin{proof}
    By \thref{kato}, there are $s$ continuous functions $\lambda_j: [0,1] \longrightarrow \mathbb{C}$, $j=1,\ldots,s$, such that $\mathsf{P}_\alpha^\mathsf{I} (\lambda_j(\alpha)) = 0$, $\forall \alpha \in [0,1]$. Notice that $\exists k \in \{1,\ldots,s\}$ such that $\lambda_k (0) = \omega_q^p$. By \thref{sqangle}, \thref{obs:farey} and \thref{obs:farey2}, it must be the case that $\lambda_k (1) = \omega_s^r$ (otherwise, $\lambda_k$ would cross a ``forbidden'' ray). Furthermore, the arc must be entirely contained in the stated sector given that, again, it would cross either of the forbidden rays $\{ \rho \omega_q^p \mid \rho \ge 0 \}$ and $\{ \rho \omega_s^r \mid \rho \ge 0 \}$ (see Figure \ref{fig:schematics}).  
\end{proof}

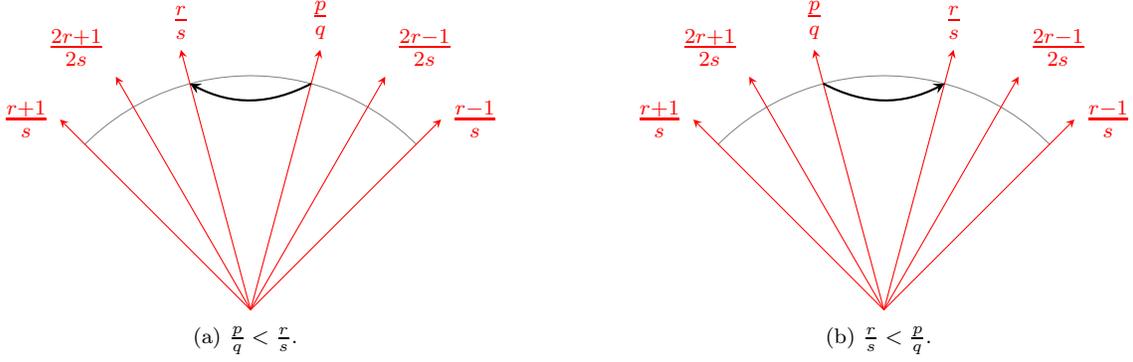
\begin{figure}[H]
    \centering
        \begin{subfigure}{.49\textwidth}
            \centering
            \scalebox{1}{
            \begin{tikzpicture}
                \begin{axis}[
                    axis lines=none,
                    axis equal image,
                    xmin=-1.1,
                    xmax=1.1,
                    ymin=0.0,
                    ymax=1.4,]
                    \draw[gray,very thin,domain=45:135] plot ({cos(\x)}, {sin(\x)});
                    
                    \draw[-stealth,red] (0,0) -- ({1.15*cos(45)}, {1.15*sin(45)}) node[right]                 {$\frac{r-1}{s}$};
                    \draw[-stealth,red] (0,0) -- ({1.15*cos(60)}, {1.15*sin(60)}) node[above right]           {$\frac{2r-1}{2s}$};
                    \draw[-stealth,red] (0,0) -- ({1.15*cos(75)}, {1.15*sin(75)}) node[above]                 {$\frac{p}{q}$};
                    \draw[-stealth,red] (0,0) -- ({1.15*cos(105)}, {1.15*sin(105)}) node[above]               {$\frac{r}{s}$};
                    \draw[-stealth,red] (0,0) -- ({1.15*cos(120)}, {1.15*sin(120)}) node[above left]          {$\frac{2r+1}{2s}$};
                    \draw[-stealth,red] (0,0) -- ({1.15*cos(135)},{1.15*sin(135)}) node[left]                 {$\frac{r+1}{s}$};
            
                    \draw[-stealth, thick] ({cos(75)},{sin(75)}) to [bend left=27.5] ({cos(105)},{sin(105)});
                \end{axis}
            \end{tikzpicture}}
            \caption{$\frac{p}{q} < \frac{r}{s}$.}
            \label{fig:schematic_one}
        \end{subfigure}
    \hfill    
        \begin{subfigure}{.49\textwidth}
            \centering
                \scalebox{1.00}{
                \begin{tikzpicture}
                    \begin{axis}[
                    axis lines=none,
                    axis equal image,
                    xmin=-1.1,
                    xmax=1.1,
                    ymin=0.0,
                    ymax=1.4,]
                    \draw[gray,very thin,domain=45:135] plot ({cos(\x)}, {sin(\x)});
                    
                    \draw[-stealth,red] (0,0) -- ({1.15*cos(45)}, {1.15*sin(45)}) node[right]                 {$\frac{r-1}{s}$};
                    \draw[-stealth,red] (0,0) -- ({1.15*cos(60)}, {1.15*sin(60)}) node[above right]           {$\frac{2r-1}{2s}$};
                    \draw[-stealth,red] (0,0) -- ({1.15*cos(75)}, {1.15*sin(75)}) node[above]                 {$\frac{r}{s}$};
                    \draw[-stealth,red] (0,0) -- ({1.15*cos(105)}, {1.15*sin(105)}) node[above]               {$\frac{p}{q}$};
                    \draw[-stealth,red] (0,0) -- ({1.15*cos(120)}, {1.15*sin(120)}) node[above left]          {$\frac{2r+1}{2s}$};
                    \draw[-stealth,red] (0,0) -- ({1.15*cos(135)},{1.15*sin(135)}) node[left]                 {$\frac{r+1}{s}$};
            
                    \draw[-stealth, thick] ({cos(105)},{sin(105)}) to [bend right=27.5] ({cos(75)},{sin(75)});
                    \end{axis}
                \end{tikzpicture}}
                \caption{$\frac{r}{s} < \frac{p}{q}$.}
            \label{fig:schematic_two}
        \end{subfigure}
    \caption{Schemata for Type I K-arcs.}
    \label{fig:schematics}
\end{figure}

\begin{lemma}
    \thlabel{type_one_mod}
        If $\mathsf{P}_\alpha^\mathsf{I}(\lambda) = 0$, with $\alpha \in (0,1)$, then $\vert \lambda \vert < 1$.    
\end{lemma}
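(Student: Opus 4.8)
The plan is to read the bound off the root equation $\lambda^{s}=\beta\lambda^{s-q}+\alpha$ via the triangle inequality, and then use the location of the arc to upgrade it to a strict inequality. First, $\lambda\ne 0$ because $\mathsf{P}_\alpha^\mathsf{I}(0)=-\alpha\ne 0$. Next I would show $|\lambda|\le 1$: if instead $|\lambda|>1$, then (since $1\le s-q<s$) we have $|\lambda|^{s-q}<|\lambda|^{s}$ and $1<|\lambda|^{s}$, so
\[ |\lambda|^{s}=\bigl|\beta\lambda^{s-q}+\alpha\bigr|\le\beta|\lambda|^{s-q}+\alpha<\beta|\lambda|^{s}+\alpha|\lambda|^{s}=|\lambda|^{s}, \]
the last step using $\alpha+\beta=1$ and $\alpha,\beta\in(0,1)$; this contradiction yields $|\lambda|\le 1$.

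Next I would determine when equality $|\lambda|=1$ can occur. In that case the displayed chain of inequalities collapses to a chain of equalities; in particular equality holds in the triangle inequality for the two nonzero complex numbers $\beta\lambda^{s-q}$ and $\alpha>0$, so $\lambda^{s-q}$ is a positive real, hence $\lambda^{s-q}=1$ since $|\lambda^{s-q}|=1$. Feeding this back into the root equation gives $\lambda^{s}=\beta+\alpha=1$, and therefore $\lambda^{q}=\lambda^{s}/\lambda^{s-q}=1$. Writing $\lambda=\omega_q^{j}$ for a suitable integer $j$, the relation $\lambda^{s}=1$ forces $q\mid js$, whence $q\mid j$ by \thref{littlelemma} (using $\gcd(q,s)=1$); thus $\lambda=1$. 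So the zero set of $\mathsf{P}_\alpha^\mathsf{I}$ meets the unit circle only at $\lambda=1$.

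Finally, since the arc $\lambda$ of \thref{mainresult} satisfies $\Arg\lambda(\alpha)/(2\pi)\ge\min\{p/q,r/s\}>0$ for $\alpha\in(0,1)$, the value $\lambda(\alpha)$ is not a nonnegative real, and in particular $\lambda(\alpha)\ne 1$; combined with $|\lambda(\alpha)|\le 1$ and the equality analysis above, this gives $|\lambda(\alpha)|<1$ for all $\alpha\in(0,1)$. The triangle-inequality estimate and the elementary number-theoretic step are routine; the point requiring care is the equality analysis together with its aftermath, since the zero $\lambda=1$ of $\mathsf{P}_\alpha^\mathsf{I}$ genuinely lies on the unit circle and must be separated off — here by the standing hypothesis $0<p/q,r/s<1/2$, which keeps the arc away from the positive real axis. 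I expect this disposal of the $\lambda=1$ case to be the only subtlety of the proof.
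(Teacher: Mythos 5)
Your proof is correct, and it takes a genuinely more elementary route than the paper's. The paper's proof is a one-liner: it cites the earlier proposition that $\Theta_n$ lies in the closed unit disc to get $|\lambda|\le 1$, then invokes the Johnson--Paparella result that the companion matrix of $\mathsf{P}_\alpha^\mathsf{I}$ is nonnegative and primitive for $\alpha\in(0,1)$, so that by Perron--Frobenius the Perron eigenvalue $1$ strictly dominates all other eigenvalues in modulus. You instead work directly with the root equation $\lambda^s=\beta\lambda^{s-q}+\alpha$: a triangle-inequality estimate gives $|\lambda|\le 1$, and the equality analysis shows that any unit-modulus root forces $\lambda^{s-q}=1$ and $\lambda^{q}=1$, hence $\lambda=1$ from $\gcd(q,s)=1$. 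That is a self-contained, polynomial-level version of the same strict-dominance fact and avoids any appeal to companion matrices, primitivity, or Perron--Frobenius.

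You were also right to flag the $\lambda=1$ subtlety. Since $\mathsf{P}_\alpha^\mathsf{I}(1)=1-\beta-\alpha=0$ for every $\alpha$, the lemma as literally stated is not quite true and must be read as excluding $\lambda=1$ --- equivalently, as a statement about the arc value $\lambda(\alpha)$, which the standing sector hypotheses keep off the positive real axis. The paper's one-line proof carries the same hidden exclusion (primitivity only controls eigenvalues other than the Perron root $1$) but does not spell it out, whereas you dispose of it explicitly via $\Arg\lambda(\alpha)>0$. This matches how the lemma is actually invoked in the simplicity argument immediately afterwards, where $\lambda$ denotes an arc value.
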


\begin{proof}
    By Proposition \ref{modlessthanone}, $\vert \lambda \vert \le 1$ and since the companion matrix of $\mathsf{P}_\alpha^\mathsf{I}$ is nonnegative and primitive \cite[Theorem 3.2]{jp2017}, the result follows. 
\end{proof}

\begin{theorem}[c.f.~Kirkland et al.~{\cite[Corollary 4.2]{kls2020}}]
    If $\floor{n/q} = 1$, then the K-arc 
    \[ K_n\left( \min \left\{ \frac{p}{q}, \frac{r}{s} \right\}, \max \left\{ \frac{p}{q}, \frac{r}{s} \right\} \right) \]
    is simple.
\end{theorem}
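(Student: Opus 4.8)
The plan is to prove that the arc $\lambda\colon[0,1]\to\mathbb{C}$ furnished by \thref{mainresult} is injective; since $[0,1]$ is compact and $\mathbb{C}$ is Hausdorff, a continuous injection out of $[0,1]$ is automatically a homeomorphism onto its image, so this is precisely the assertion that $K_n(\min\{p/q,r/s\},\max\{p/q,r/s\})$ is a simple arc. (The argument below uses only the defining properties of a Type~I K-arc, hence applies to any admissible parametrization.) So suppose, toward a contradiction, that $\lambda(\alpha_1)=\lambda(\alpha_2)=:\zeta$ with $\alpha_1\neq\alpha_2$.

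The crux is an elementary subtraction. Since $\mathsf{P}_{\alpha_1}^{\mathsf{I}}(\zeta)=\mathsf{P}_{\alpha_2}^{\mathsf{I}}(\zeta)=0$ and $\mathsf{P}_\alpha^{\mathsf{I}}(t)=t^{s}-(1-\alpha)t^{s-q}-\alpha$,
\[
0=\mathsf{P}_{\alpha_1}^{\mathsf{I}}(\zeta)-\mathsf{P}_{\alpha_2}^{\mathsf{I}}(\zeta)=(\alpha_2-\alpha_1)\bigl(1-\zeta^{\,s-q}\bigr),
\]
whence $\zeta^{\,s-q}=1$; substituting this back into $\mathsf{P}_{\alpha_1}^{\mathsf{I}}(\zeta)=0$ gives $\zeta^{s}=1$ as well. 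Because $\gcd(q,s)=1$ we have $\gcd(s-q,s)=1$, so $\zeta=1$: indeed $\{m\in\mathbb{Z}:\zeta^{m}=1\}$ is a subgroup of $\mathbb{Z}$ containing $s$ and $s-q$, hence contains $\gcd(s-q,s)=1$ (alternatively, one may invoke \thref{littlelemma}).

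To finish I would observe that $1$ never lies on the K-arc. At the endpoints, $\lambda(0)=\omega_q^{p}\neq 1$ and $\lambda(1)=\omega_s^{r}\neq 1$ since $0<p/q,\,r/s<1/2$; for $\alpha\in(0,1)$, the argument bound of \thref{mainresult} gives $0<2\pi\min\{p/q,r/s\}\le\Arg\lambda(\alpha)\le 2\pi\max\{p/q,r/s\}<\pi$, so $\lambda(\alpha)$ has positive imaginary part and in particular $\lambda(\alpha)\neq 1$ (one could instead cite \thref{type_one_mod} to obtain $|\lambda(\alpha)|<1$). This contradicts $\zeta=1$, so $\lambda$ is injective and the K-arc is simple. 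There is no real obstacle here beyond spotting the subtraction identity; the only point requiring a moment's care is confirming that the value $1$ forced by that identity is excluded from the arc, which is immediate from the endpoint data and the sector constraint already established in \thref{mainresult}.
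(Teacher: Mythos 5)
Your proof is correct and hinges on the same subtraction identity that drives the paper's argument, but it diverges afterward in a way worth noting. The paper stops at $\lambda^{s-q}=1$, concludes only $|\lambda|=1$, and then invokes \thref{type_one_mod} (primitivity of the companion matrix) to derive the contradiction when $\alpha$ or $\hat\alpha$ lies in $(0,1)$, handling the edge case $\{\alpha,\hat\alpha\}=\{0,1\}$ separately. You instead substitute $\zeta^{s-q}=1$ back into $\mathsf{P}_{\alpha_1}^{\mathsf{I}}(\zeta)=0$ to also get $\zeta^{s}=1$, use $\gcd(s-q,s)=\gcd(q,s)=1$ to pin down $\zeta=1$ exactly, and then exclude $1$ from the arc via the endpoint data together with the sector constraint of \thref{mainresult}. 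Your route is more self-contained: it avoids \thref{type_one_mod} (and hence the Perron--Frobenius machinery underlying it) and it dispenses with the $\{0,1\}$ edge-case split, since $\zeta=1$ is ruled out uniformly by the argument bound $0<2\pi\min\{p/q,r/s\}\le\Arg\lambda(\alpha)\le 2\pi\max\{p/q,r/s\}<\pi$. The trade-off is that the paper's version generalizes more readily should one want a modulus estimate on its own, whereas yours exploits the coprimality structure more fully and is, if anything, slightly more elementary.
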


\begin{proof}
    For contradiction, if $\lambda(\alpha) = \lambda = \lambda(\hat{\alpha})$ and $\mathsf{P}_\alpha(\lambda) = 0 = \mathsf{P}_{\hat{\alpha}}(\lambda)$, with $\alpha \ne \hat{\alpha}$, then $\beta \ne \hat{\beta}$ and
    \[ 
    \lambda^s - \beta \lambda^{s-q} - \alpha =  
    \lambda^s - \hat{\beta} \lambda^{s-q} - \hat{\alpha} \implies \lambda^{s-q} = 
    \frac{\hat{\alpha} - \alpha}{\beta - \hat{\beta}}. \]
    Since $\beta - \hat{\beta} = (1-\alpha)-(1-\hat{\alpha}) = \hat{\alpha} - \alpha$, it follows that $\lambda^{s-q} = 1$. Consequently, $\vert \lambda \vert = 1$. However, if $\alpha \in (0,1)$ or $\hat{\alpha} \in (0,1)$, then, by \thref{type_one_mod}, $\vert \lambda \vert < 1$, a contradiction. Othwerise, if $\alpha = 0$ and $\hat{\alpha} = 1$, then $\omega_q^p = \lambda(0) = \lambda(1) = \omega_s^r$, a contradiction since $\omega_q^p \ne \omega_s^r$. The case when $\alpha = 1$ and $\hat{\alpha} = 0$ yields a similar contradiction.   
\end{proof}

\section{Powers of K-Arcs}\label{sect:karcpowers}

The purpose of this section is to simplify and expand results given by Johnson and Paparella \cite[Theorem 5.4]{jp2017} and by Kim and Kim \cite[Theorem 4]{kk2020}) concerning point-wise powers of special Type I K-arcs. For similar results, see Joshi et al.~\cite{jks2023}.

Given $\lambda:[0,1] \longrightarrow \mathbb{C}$ and $d \in \mathbb{N}$, denote by $\lambda^d$ the function $\alpha \in [0,1] \longmapsto \lambda(\alpha)^d \in \mathbb{C}$. 

In this section, $d$, $m$, and $n$ are integers such that $1 < d < m \le n$ and $\left( \frac{1}{m}, \frac{1}{m-1} \right)$ is a Farey pair of order $n$ (as a consequence of \thref{leveque}, the latter holds if and only if $n < 2m -1$). Notice that the reduced Ito polynomial is given by 
\[ \mathsf{P}_\alpha^\mathsf{I} (t) = t^m - \beta t - \alpha,\ \alpha \in [0,1],\ \beta \coloneqq 1 - \alpha. \]
Let $\lambda: [0,1] \longrightarrow \mathbb{C}$ be the continuous function guaranteed by \thref{mainresult} such that \( \mathsf{P}_\alpha^\mathsf{I} (\lambda(\alpha)) = 0 \) ($\forall \alpha \in [0,1]$), $\lambda(0) = \omega_{m-1}$, $\lambda(1) = \omega_{m}$, and $\arg \lambda (\alpha) \in \left[ \frac{2\pi}{m}, \frac{2\pi}{m-1} \right]$ ($\forall \alpha \in [0,1]$).

The following result is also a facile consequence of \thref{leveque}. 

\begin{lemma}
    \thlabel{lem:poweredfareypairs}
        Suppose that $d$, $m$, and $n$ are integers such that $1 < d < m \le n$.
        \begin{enumerate}
            [label=\roman*)]
                \item If $m = dk$, then $\left( \frac{1}{m}, \frac{1}{m-1} \right)$ and $\left( \frac{1}{k}, \frac{d}{m-1} \right)$ are Farey pairs of order $n$ if and only if $n < m + k - 1$.
            
                \item If $m-1 = dk$, then $\left( \frac{1}{m}, \frac{1}{m-1} \right)$ and $\left( \frac{d}{m}, \frac{1}{k} \right)$ are Farey pairs of order $n$ if and only if $n < m + k$.
        \end{enumerate}
\end{lemma}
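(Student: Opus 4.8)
The plan is to invoke \thref{leveque} directly for each of the two candidate Farey pairs in each part. Recall that, by that result, two fractions $p/q, r/s \in F_n$ with $p/q < r/s$ form a Farey pair of order $n$ precisely when $ps - qr = \pm 1$ and $q + s > n$. So for each ordered pair appearing in the statement I would (a) confirm that both fractions lie in $F_n$ and are written in increasing order, (b) verify the unimodular relation $ps - qr = \pm 1$, which will hold identically once the hypothesis $m = dk$ (resp.\ $m - 1 = dk$) is used, and (c) record the remaining, substantive condition $q + s > n$. The observation that organizes everything is that, among the two pairs in each part, the second has the smaller value of $q+s$, and this smaller value is still strictly less than $2m - 1 = m + (m-1)$; hence the inequality $q + s > n$ for the second pair already forces the inequality $q + s > n$ for $\left(\frac{1}{m}, \frac{1}{m-1}\right)$. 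Consequently ``both pairs are Farey pairs'' reduces to the single inequality coming from the second pair, which is exactly the claimed bound on $n$.

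For part (i), I would write $k = m/d$, noting $1 < k < m$ since $1 < d < m$. The fraction $d/(m-1)$ is in lowest terms because $m - 1 = dk - 1 \equiv -1 \pmod d$, so $\gcd(d, m-1) = 1$; together with $d < m$ and $m - 1 \le n$ this puts $d/(m-1) \in F_n$, and $1/k \in F_n$ since $k < m \le n$. One checks $1/k < d/(m-1)$ (the difference is $1/(k(m-1)) > 0$) and
\[ 1 \cdot (m-1) - k \cdot d = (m-1) - m = -1, \]
so \thref{leveque} gives that $\left(\frac{1}{k}, \frac{d}{m-1}\right)$ is a Farey pair of order $n$ iff $k + (m-1) > n$, i.e.\ iff $n < m + k - 1$. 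Since $k < m$, this inequality yields $n < m + k - 1 < 2m - 1$; combined with $1 \cdot (m-1) - m \cdot 1 = -1$ and $1/m < 1/(m-1)$, \thref{leveque} then shows $\left(\frac{1}{m}, \frac{1}{m-1}\right)$ is a Farey pair as well. The converse is trivial: if both pairs are Farey pairs, applying \thref{leveque} to the second gives $n < m + k - 1$.

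Part (ii) runs in parallel: with $k = (m-1)/d$ one has $m = dk + 1 \equiv 1 \pmod d$, so $\gcd(d, m) = 1$ and $d/m \in F_n$; also $1/k \in F_n$ since $k \le (m-1)/2 < m - 1 < m \le n$. Then $d/m < 1/k$ (difference $1/(km) > 0$) and $dk - m = (m-1) - m = -1$, so $\left(\frac{d}{m}, \frac{1}{k}\right)$ is a Farey pair iff $m + k > n$. Because $d > 1$ forces $k \le (m-1)/2 < m - 1$, we get $m + k < 2m - 1$, so $n < m + k$ again implies $n < 2m - 1$ and hence that $\left(\frac{1}{m}, \frac{1}{m-1}\right)$ is a Farey pair; the converse is immediate as before.

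I do not anticipate a real obstacle: the argument is essentially bookkeeping with \thref{leveque}. The only places needing a little care are the two $\gcd$ checks (each is a one-line congruence mod $d$), the verification that each pair is written with its smaller element first, and the elementary inequalities $m + k - 1 < 2m - 1$ (part (i)) and $m + k < 2m - 1$ (part (ii)) that make the condition on $\left(\frac{1}{m}, \frac{1}{m-1}\right)$ redundant. It is also worth noting at the outset that the standing hypothesis $m \le n$ is what guarantees $\frac{1}{m}, \frac{1}{m-1} \in F_n$, so that \thref{leveque} is applicable.
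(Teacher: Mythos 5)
Your proof is correct and takes the approach the paper has in mind: the paper gives no written argument, describing the lemma only as ``a facile consequence of \thref{leveque},'' and you carry out exactly that program—checking membership in $F_n$, the unimodular relations, and the inequality $q+s>n$ for each pair, and observing that the condition for $\left(\frac{1}{m},\frac{1}{m-1}\right)$ is subsumed by the condition for the other pair. The details (the $\gcd$ checks mod $d$, $k>1$ in part (i), $k\le(m-1)/2$ in part (ii), and the comparisons with $2m-1$) all hold up.
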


\begin{theorem}
    \thlabel{thm:mink_power}
        Suppose that $d$, $m$, and $n$ are integers such that $1 < d < m \le n$.
        \begin{enumerate}
            [label=\roman*)]
                \item \label{partone} If $m = dk$ and $n < m + k - 1$, then part \ref{secondstep} of \thref{bigprop} holds for $\left( \frac{1}{k}, \frac{d}{m-1} \right)$.
            
                \item If $m - 1 = dk$ and $n < m + k - 1$, then part \ref{secondstep} of \thref{bigprop} holds for $\left( \frac{d}{m}, \frac{1}{k} \right)$.
        \end{enumerate}    
\end{theorem}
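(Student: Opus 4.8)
The two cases are instances of a single construction: the sought arc is a (possibly reparametrized) $d$-th power of the Type~I arc $\lambda$ introduced at the start of this section, i.e.\ the one furnished by \thref{mainresult} with $\lambda(0)=\omega_{m-1}$, $\lambda(1)=\omega_{m}$, and $\arg\lambda(\alpha)\in[2\pi/m,2\pi/(m-1)]$. Write $\beta=1-\alpha$. When $m=dk$ I would set $\Lambda(\alpha)\coloneqq\lambda(1-\alpha)^{d}$, and when $m-1=dk$ I would set $\Lambda(\alpha)\coloneqq\lambda(\alpha)^{d}$; in either case $\Lambda$ is continuous because $\lambda$ is. It then suffices to check that (a) $\Lambda$ annihilates the Ito polynomial \eqref{itopolynomial} of the target pair, (b) $\Lambda$ has the prescribed endpoints on the unit circle, and (c) $\Lambda$ satisfies the argument bound; granting these, $\Lambda$ witnesses part~\ref{secondstep} of \thref{bigprop} for the target pair, whose fractions are Farey neighbors of order $n$ by \thref{lem:poweredfareypairs} (since $n<m+k-1$) and with $n\ge m>3$.

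I would start with the target polynomial. With $q=k$ the smaller of the two target denominators, $s=m-1$ when $m=dk$ and $s=m$ when $m-1=dk$, the inequalities $m\le n<m+k-1$ combined with $m=dk$, resp.\ $m-1=dk$, force $\lfloor n/k\rfloor=d$; hence the target Ito polynomial is $\mathsf{P}_\alpha(t)=t^{s}(t^{k}-\beta)^{d}-\alpha^{d}t^{dk}$, with $dk=m$ in the first case and $dk=m-1$ in the second. Step (a) then rests on one short identity. If $m=dk$, put $\mu\coloneqq\lambda(1-\alpha)$; then \eqref{type_one_ito} reads $\mu^{m}-\beta=\alpha\mu$, so $(\mu^{dk}-\beta)^{d}=(\alpha\mu)^{d}$ since $dk=m$, and on substituting $t=\mu^{d}$ both summands of $\mathsf{P}_\alpha$ equal $\alpha^{d}\mu^{dm}$, so $\mathsf{P}_\alpha(\mu^{d})=0$. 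If $m-1=dk$, put $\mu\coloneqq\lambda(\alpha)$ (nonzero for $\alpha\in(0,1]$, with $\alpha=0$ immediate); then \eqref{type_one_ito} gives $\mu^{m-1}-\beta=\alpha\mu^{-1}$, so $(\mu^{dk}-\beta)^{d}=\alpha^{d}\mu^{-d}$ since $dk=m-1$, and again the two summands of $\mathsf{P}_\alpha(\mu^{d})$ cancel. For (b), $\lambda(0)^{d}=\omega_{m-1}^{d}$ and $\lambda(1)^{d}=\omega_{m}^{d}$; since $m=dk$ makes $\omega_{m}^{d}=\omega_{k}$ and $m-1=dk$ makes $\omega_{m-1}^{d}=\omega_{k}$, these are exactly the two unit-circle points of the target pair, and inspecting the roots of $\mathsf{P}_\alpha$ at $\alpha=0$ and $\alpha=1$ shows they occupy the ends $\mathsf{P}_\alpha$ demands — directly when $m-1=dk$, and only after the substitution $\alpha\mapsto 1-\alpha$ when $m=dk$. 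For (c), multiplying $\arg\lambda(\cdot)\in[2\pi/m,2\pi/(m-1)]$ by $d$ places a continuous branch of $\arg\Lambda(\alpha)$ in $[2\pi/k,2\pi d/(m-1)]$ when $m=dk$, resp.\ $[2\pi d/m,2\pi/k]$ when $m-1=dk$, which is precisely $2\pi$ times the closed interval between the two target fractions.

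The substantive work here is bookkeeping rather than analysis: correctly reading off $q$, $s$, and $\lfloor n/q\rfloor$ for the target pair, confirming $dk=m$ resp.\ $dk=m-1$ so that the exponents in $\mathsf{P}_\alpha(\mu^{d})$ collapse, and — the point most likely to trip one up — determining which of the two boundary roots of unity the map $z\mapsto z^{d}$ sends to the $\alpha=0$ end of the target polynomial and which to the $\alpha=1$ end, since that is what forces the reparametrization when $m=dk$ but not when $m-1=dk$. A minor caveat also attaches to (c): when the larger target fraction is at least $1/2$ — which does occur, e.g.\ for the pair $\{1/2,2/3\}$ arising from $d=2$, $m=4$, $n=4$ — the powered arc runs into the closed lower half-plane, so the bound should be phrased with a continuous branch of $\arg$, as in the preamble to this section, rather than the principal value $\Arg$; with that reading it is verbatim part~\ref{secondstep} of \thref{bigprop}.
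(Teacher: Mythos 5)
Your argument follows the same route as the paper's proof of \thref{thm:mink_power}: invoke \thref{lem:poweredfareypairs} for the Farey pairs, compute $\floor{n/k}=d$ from $m\le n<m+k-1$, reduce the target Ito equation, and verify algebraically that the $d$-th power of the Type~I arc annihilates it --- the paper's substitutions $\lambda_\delta\coloneqq\lambda(\delta)$ (with $\delta=1-\gamma$) and $\lambda_\gamma\coloneqq\lambda(\gamma)$ are exactly your two cases $\Lambda(\alpha)=\lambda(1-\alpha)^d$ and $\Lambda(\alpha)=\lambda(\alpha)^d$. You make explicit two things the paper leaves implicit: that case (i) genuinely requires the reparametrization $\alpha\mapsto 1-\alpha$ for the endpoints at $\alpha=0,1$ to land where \thref{bigprop}\ref{secondstep} demands (the paper shows $\lambda_\delta^d$ solves the equation at parameter $\gamma=1-\delta$ and then simply calls ``$\lambda^d$'' the required function), and that the argument bound only reads verbatim with a continuous branch of $\arg$ rather than the principal $\Arg$ once the target sector reaches or crosses $\pi$. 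The latter caveat is a genuine imprecision in the way \thref{bigprop}\ref{secondstep} is phrased; the paper does not address it in the proof of this theorem, so your flag is warranted and does not indicate a flaw in your own reasoning.
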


\begin{proof} 
(i) By \thref{lem:poweredfareypairs}, $\left( \frac{1}{m}, \frac{1}{m-1} \right)$ and $\left( \frac{1}{k}, \frac{d}{m-1} \right)$ are Farey pairs of order $n$. Since $k > 1$ and  
\begin{equation*}
    d = \frac{m}{k} \le \frac{n}{k} < \frac{m + k - 1}{k} = d + 1 - \frac{1}{k} < d + 1,
\end{equation*}
it follows that $\floor{n/k} = d$. Thus, the Ito equation for $\left( \frac{1}{k}, \frac{d}{m-1} \right)$ is given by
\begin{align*}
    t^{m-1}(t^k - \delta)^d = \gamma^d t^m,\ \gamma \in [0,1],\ \delta \coloneqq 1 -\gamma,
\end{align*}
and any nonzero root satisfies the reduced Ito equation
\begin{equation}
    (t^k - \delta)^d = \gamma^d t,\ \gamma \in [0,1],\ \delta \coloneqq 1 -\gamma. \label{reditopower}
\end{equation}    
If $\lambda_\delta \coloneqq \lambda(\delta)$, then $\lambda_\delta^m - \delta = \gamma \lambda_\delta$ and 
\[ ((\lambda_\delta^d)^k - \delta)^d = (\lambda_\delta^m - \delta)^d = (\gamma \lambda_\delta)^d = \gamma^d \lambda_\delta^d, \] 
i.e., $\lambda_\delta^d$ satisfies \eqref{reditopower}. Thus, the function $\lambda^d$ is the required function.

(ii) Suppose that $m-1 = dk$, $\left( \frac{1}{m}, \frac{1}{m-1} \right)$ and $\left( \frac{d}{m}, \frac{1}{k} \right)$ are Farey pairs of order $n$, and $n < m + k - 1$. Since $k > 1$ and  
\begin{equation*}
    d = \frac{m-1}{k} < \frac{n}{k} < \frac{m + k - 1}{k} = d + \frac{1}{k} < d + 1,
\end{equation*}
it follows that $\floor{n/k} = d$. Thus, the Ito equation for $\left( \frac{d}{m}, \frac{1}{k} \right)$ is given by
\begin{align*}
    t^m(t^k - \delta)^d = \gamma^d t^{m-1},\ \gamma \in [0,1],\ \delta \coloneqq 1 -\gamma,
\end{align*}
and any nonzero root satisfies the reduced Ito equation
\begin{equation}
    t(t^k - \delta)^d = \gamma^d,\ \gamma \in [0,1],\ \delta \coloneqq 1 -\gamma. \label{reditopower2}
\end{equation}    
If $\lambda_\gamma \coloneqq \lambda(\gamma)$, then $\gamma = \lambda_\gamma(\lambda_\gamma^{m-1} - \delta)$ and 
\[ \gamma^d = (\lambda_\gamma(\lambda_\gamma^{m-1} - \delta))^d = \lambda_\gamma^d ((\lambda_\gamma^d)^k - \delta)^d, \] 
i.e., $\lambda_\gamma^d$ satisfies \eqref{reditopower2}. Thus, the function $\lambda^d$ is the required function.
\end{proof}

\begin{remark}
    \thlabel{rem:special_arc}
        If $n = 2\ell$, with $\ell \ge 2$, then the reduced Ito polynomial for $\left( \frac{1}{2\ell}, \frac{1}{2\ell - 1} \right)$ is given by 
            \[ \mathsf{P}_\alpha^\mathsf{I} (t) = t^{2\ell} - \beta t - \alpha,\ \alpha \in [0,1],\ \beta \coloneqq 1 - \alpha. \]
        Let $\lambda: [0,1] \longrightarrow \mathbb{C}$ be the continuous function guaranteed by \thref{mainresult} such that \( \mathsf{P}_\alpha^\mathsf{I} (\lambda(\alpha)) = 0 \) ($\forall \alpha \in [0,1]$), $\lambda(0) = \omega_{2\ell-1}$, $\lambda(1) = \omega_{2\ell}$, and 
        \[ \arg \lambda(\alpha) \in 2\pi\left[ \frac{1}{2\ell}, \frac{1}{2\ell-1} \right] (\forall \alpha \in [0,1]). \] 
        By part \ref{partone} of \thref{thm:mink_power}, $\lambda^\ell$ satisfies part \ref{secondstep} of \thref{bigprop} for $\left( \frac{1}{2}, \frac{\ell}{2\ell - 1} \right)$. In view of \thref{realsymm}, $\overline{\lambda^\ell}$ satisfies part \ref{secondstep} of \thref{bigprop} for $\left( \frac{\ell - 1}{2\ell - 1}, \frac{1}{2} \right)$.
\end{remark}

\begin{remark}
    \thlabel{rem:special_arc2}
        If $n = 4\ell$, with $\ell \ge 1$, then the reduced Ito polynomial for $\left( \frac{2\ell - 1}{n}, \frac{\ell}{2\ell + 1} \right)$ is given by 
            \[ \mathsf{P}_\alpha^\mathsf{I} (t) = t^{4\ell} - \beta t^{2\ell - 1} - \alpha,\ \alpha \in [0,1],\ \beta \coloneqq 1 - \alpha. \]
        Let $\lambda: [0,1] \longrightarrow \mathbb{C}$ be the continuous function guaranteed by \thref{mainresult} such that \( \mathsf{P}_\alpha^\mathsf{I} (\lambda(\alpha)) = 0 \) ($\forall \alpha \in [0,1]$), $\lambda(0) = \omega_{2\ell + 1}^\ell$, $\lambda(1) = \omega_{4\ell}^{2\ell - 1}$, and 
        \[ \arg \lambda(\alpha) \in 2\pi\left[ \frac{2\ell - 1}{n}, \frac{\ell}{2\ell + 1} \right] (\forall \alpha \in [0,1]).\] 
        By part \ref{partone} of \thref{thm:mink_power}, $\lambda^2$ satisfies part \ref{secondstep} of \thref{bigprop} for $\left( \frac{2\ell - 1}{2\ell}, \frac{2\ell}{2\ell + 1} \right)$. In view of \thref{realsymm}, $\overline{\lambda^\ell}$ satisfies part \ref{secondstep} of \thref{bigprop} for $\left( \frac{1}{2\ell}, \frac{1}{2\ell + 1} \right)$.
\end{remark}

\begin{example}
If $n=8$, then there are eleven K-arcs in the upper-half plane. Figure \ref{fig:enter-label} displays the K-arcs (solid) of $\Theta_8$ in the upper-half plane guaranteed by \thref{mainresult}, \thref{thm:mink_power}, \thref{rem:special_arc}, and \thref{rem:special_arc2}.  

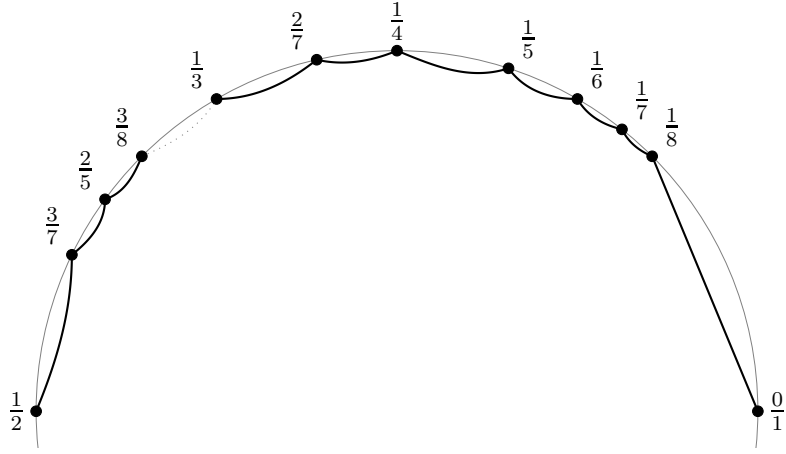
\begin{figure}[H]
    \centering
        \begin{tikzpicture}
        \begin{axis}[
            axis lines = none,
            axis equal image,
            scale=1.75,
            xmin=-1.25,
            xmax=1.25,
            ymin=-.1,
            ymax=1.25,
        ]
        \draw[very thin, gray] (0,0) circle(1.0);
        \addplot[thick] coordinates{(1,0) (0.7071067811865476, 0.7071067811865475)};    
        \addplot[thick] table {plots/arc1_theta8.dat};                                  
        \addplot[thick] table {plots/arc2_theta8.dat};                                  
        \addplot[thick] table {plots/arc3_theta8.dat};                                  
        \addplot[thick] table {plots/arc4_theta8.dat};                           
        \addplot[thick] table {plots/arc5_theta8.dat};                                  
        \addplot[thick] table {plots/arc6_theta8.dat};                                  
        \addplot[dotted, gray] table {plots/arc7_theta81.dat};  
        \addplot[dotted, gray] table {plots/arc7_theta82.dat};                        
        \addplot[thick] table {plots/Arc8_theta8.dat};                                  
        \addplot[thick] table {plots/arc9_theta8.dat};                                  
        \addplot[thick] table {plots/arc10_theta8.dat};                                 

        \filldraw[black] (1,0)                          circle (2pt) node[right]        {$\frac{0}{1}$};
        \filldraw[black] ({cos(45)}, {sin(45)})         circle (2pt) node[above right]  {$\frac{1}{8}$};
        \filldraw[black] ({cos(360/7)}, {sin(360/7)})   circle (2pt) node[above right]  {$\frac{1}{7}$};
        \filldraw[black] ({cos(60)}, {sin(60)})         circle (2pt) node[above right]  {$\frac{1}{6}$};
        \filldraw[black] ({cos(72)}, {sin(72)})         circle (2pt) node[above right]  {$\frac{1}{5}$};
        \filldraw[black] ({cos(90)}, {sin(90)})         circle (2pt) node[above]        {$\frac{1}{4}$};
        \filldraw[black] ({cos(720/7)}, {sin(720/7)})   circle (2pt) node[above left]   {$\frac{2}{7}$};
        \filldraw[black] ({cos(120)}, {sin(120)})       circle (2pt) node[above left]   {$\frac{1}{3}$};
        \filldraw[black] ({cos(135)}, {sin(135)})       circle (2pt) node[above left]   {$\frac{3}{8}$};
        \filldraw[black] ({cos(144)}, {sin(144)})       circle (2pt) node[above left]   {$\frac{2}{5}$};
        \filldraw[black] ({cos(1080/7)}, {sin(1080/7)}) circle (2pt) node[above left]   {$\frac{3}{7}$};
        \filldraw[black] ({cos(180)}, {sin(180)})       circle (2pt) node[left]         {$\frac{1}{2}$};
        
        \end{axis}
    \end{tikzpicture}
    \caption{The region $\partial\Theta_8$ in the upper-half plane.}
    \label{fig:enter-label}
\end{figure}
\end{example}

\section{Boundary \& Extremal Points}

In this section, we provide an elementary proof that $\partial\Theta_n \subseteq E_n$ whenever $n > 3$.

The following result is well-known or otherwise easy to establish. 

\begin{lemma}
    \thlabel{lambdaconv}
        If $\lambda \in \Theta_n$, then $C_p(\lambda) \coloneqq \conv(1,\lambda,\ldots,\lambda^p) \subseteq \Theta_n,\forall p \in \mathbb{N}$.     
\end{lemma}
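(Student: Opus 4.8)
The plan is to exploit the elementary closure property that the set of $n$-by-$n$ stochastic matrices is stable under substitution into ``convex polynomials'': if $A$ is stochastic and $q(t) = \sum_{k=0}^{p} \alpha_k t^k$ with $\alpha_k \ge 0$ and $\sum_{k=0}^{p} \alpha_k = 1$, then $q(A)$ is again stochastic. Indeed, each power $A^k$ (with the convention $A^0 = I$) is a product of entrywise nonnegative matrices, hence entrywise nonnegative, so $q(A) = \sum_{k=0}^{p} \alpha_k A^k \ge 0$; and since $A^k e = e$ for every $k \ge 0$, we obtain $q(A) e = \sum_{k=0}^{p} \alpha_k A^k e = \bigl( \sum_{k=0}^{p} \alpha_k \bigr) e = e$. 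Thus $q(A)$ is stochastic. The only mild points worth recording here are that the ``constant term'' corresponds to $A^0 = I$, which is itself nonnegative with unit row sums, and that the relevant eigenvector below may be complex, which is harmless.

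With this in hand the argument is immediate, so I would proceed as follows. Since $\lambda \in \Theta_n$, fix a stochastic matrix $A \in \mathsf{M}_n$ and a nonzero vector $v$ with $Av = \lambda v$. Let $\mu \in C_p(\lambda)$ be arbitrary, say $\mu = \sum_{k=0}^{p} \alpha_k \lambda^k$ with $\alpha_k \ge 0$ and $\sum_{k=0}^{p} \alpha_k = 1$, and set $q(t) \coloneqq \sum_{k=0}^{p} \alpha_k t^k$ and $B \coloneqq q(A)$. By the preceding paragraph $B$ is stochastic. Since $A^k v = \lambda^k v$ for all $k \ge 0$, we have $Bv = q(A) v = q(\lambda) v = \mu v$, so $\mu$ is an eigenvalue of the stochastic matrix $B$; that is, $\mu \in \Theta_n$. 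As $\mu$ was an arbitrary element of $C_p(\lambda)$ and $p \in \mathbb{N}$ was arbitrary, the claim follows.

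There is essentially no obstacle in this proof; it is really a bookkeeping exercise in the two facts that (i) stochastic matrices form a convex set closed under multiplication, hence under nonnegative-coefficient, unit-sum polynomial combinations, and (ii) the map $q \mapsto q(A)$ sends such polynomials to stochastic matrices while carrying the scalar $q(\lambda)$ to an eigenvalue of $q(A)$. Equivalently, one may observe that $C_p(\lambda) = \{\, q(\lambda) : q \in \mathcal{Q}_p \,\}$, where $\mathcal{Q}_p$ denotes the set of polynomials of degree at most $p$ with nonnegative coefficients summing to $1$, and that $q \mapsto q(A)$ maps $\mathcal{Q}_p$ into the stochastic matrices; this reformulation makes the inclusion $C_p(\lambda) \subseteq \Theta_n$ transparent and also suggests the natural generalization obtained by replacing $\{1,\lambda,\ldots,\lambda^p\}$ with any finite multiset of powers of $\lambda$.
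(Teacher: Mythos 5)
Your proof is correct and is exactly the natural argument: if $A$ is stochastic with $Av = \lambda v$, then for any polynomial $q$ with nonnegative coefficients summing to $1$, the matrix $q(A)$ is again stochastic (convex combination of powers $A^k$, each stochastic) and has $q(\lambda)$ as an eigenvalue, so $q(\lambda)\in\Theta_n$. The paper does not give a proof---it labels the lemma ``well-known or otherwise easy to establish''---and what you have written is the standard argument being implicitly invoked, so there is nothing to compare beyond noting that your write-up supplies the omitted details cleanly.
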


\begin{remark}
    \thlabel{dubucmalik}
    More can be asserted: if 
    $$\lambda \in \{ z \in \mathbb{C} \mid \vert{z}\vert < 1 \}\backslash \{ z \in \mathbb{C} \mid (\Re z > 0)\wedge (\Im z = 0)\},$$ 
    then there is a positive integer $q$ such that $\lambda^q \subseteq C_{q-1}(\lambda)$ (see Dubuc and Malik \cite[Theorem 2.2]{dm1992}). Furthermore, $q > 2$ whenever $\Im z \ne 0$. 
\end{remark}

\begin{theorem}
    If $n > 3$, then $\partial\Theta_n \subseteq E_n$.
\end{theorem}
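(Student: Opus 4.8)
The plan is to prove the equivalent contrapositive: if $\lambda \in \Theta_n$ is \emph{not} extremal, i.e.\ $\mu \coloneqq \gamma\lambda \in \Theta_n$ for some $\gamma > 1$, then $\lambda$ lies in the interior of $\Theta_n$ (hence $\lambda \notin \partial\Theta_n$). The only inputs I would use are \thref{lambdaconv} (convex hulls of powers of a point of $\Theta_n$ stay in $\Theta_n$), \thref{modlessthanone} ($\Theta_n \subseteq \{|z| \le 1\}$), and \thref{dubucmalik}, together with the trivial facts $\omega_4 = \ii \in \Theta_n$ (since $n \ge 4$) and $\pm 1 \in E_n$ (scaling $\pm 1$ by $\gamma > 1$ leaves the unit disc). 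The argument splits according to whether $\lambda$ is real.

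First I would dispose of the real boundary points. Since $n \ge 4$, applying \thref{lambdaconv} to $\ii = \omega_4 \in \Theta_n$ shows that the ``diamond'' $D \coloneqq \conv(1,\ii,\ii^2,\ii^3) = \{ z \in \mathbb{C} \mid |\Re z| + |\Im z| \le 1 \}$ is contained in $\Theta_n$, and the interior of $D$ contains the open real segment $(-1,1)$. Consequently every real point of $\partial\Theta_n$ has modulus $1$ — a real point of modulus $<1$ lies in $(-1,1) \subseteq \mathrm{int}\,\Theta_n$ and so is not on the boundary — i.e.\ equals $\pm 1$, and $\pm 1 \in E_n$. This is the single step that genuinely uses $n > 3$: for $n = 3$ the best one can say via \thref{lambdaconv} is that the triangle $\conv(1,\omega_3,\omega_3^2)$ lies in $\Theta_3$, and its interior does not reach past $\Re z = -\tfrac12$, which is precisely why the whisker $(-1,-\tfrac12]$ persists in $\partial\Theta_3 \setminus E_3$.

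Next, the non-real case. Suppose $\lambda \in \partial\Theta_n$ is non-real and $\mu = \gamma\lambda \in \Theta_n$ with $\gamma > 1$; then $\mu$ is non-real with $0 < |\mu| = \gamma|\lambda| \le 1$, so in particular $|\lambda| < 1$ and $\Arg\mu \notin \{0,\pi\}$. By \thref{lambdaconv}, $\conv(1,\mu,\dots,\mu^p) \subseteq \Theta_n$ for every $p$. The crux is to produce a bona fide $2$-dimensional polygon $Q$ with $1,\mu \in Q$, $Q \subseteq \Theta_n$, and $0 \in \mathrm{int}\,Q$. Since $\Arg\mu$ is a nonzero angle, its integer multiples $\{\,k\Arg\mu \bmod 2\pi \mid k \ge 0\,\}$ come within $\pi/2$ of every direction (they are either dense in the circle, or equally spaced with at least three values and gaps at most $2\pi/3$), so the powers $\mu^k$ lie in no closed half-plane whose bounding line passes through the origin; a routine compactness argument then yields finitely many powers whose convex hull already contains $0$ in its interior, and adjoining $1$ and $\mu$ gives the desired $Q$, contained in $\conv(1,\mu,\dots,\mu^K) \subseteq \Theta_n$ for suitable $K$. (Alternatively, \thref{dubucmalik} shows directly that $\conv(1,\mu,\dots,\mu^{q-1})$ is already a non-degenerate polygon, with $0$ an interior point.) Since $Q$ is convex with $0 \in \mathrm{int}\,Q$ and $\mu \in Q$, the half-open segment $[0,\mu)$ lies in $\mathrm{int}\,Q$; as $\gamma > 1$, the point $\lambda = \mu/\gamma$ lies strictly inside this segment, so $\lambda \in \mathrm{int}\,Q \subseteq \mathrm{int}\,\Theta_n$, contradicting $\lambda \in \partial\Theta_n$.

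I expect the main obstacle to be the non-real case: one must carefully justify the elementary-but-fiddly claim that the origin lies in the interior of the convex hull of the powers of any non-real $\mu$ with $0 < |\mu| \le 1$ (which is exactly the content \thref{dubucmalik} is there to supply), and then confirm that the ``slack'' created by the factor $\gamma > 1$ actually places $\lambda$ strictly in the interior — for which the bookkeeping $|\mu| = \gamma|\lambda| \le 1 \Rightarrow |\lambda| < 1$, and hence $\lambda \in (0,\mu)$, is what makes the pieces fit. The real case, by contrast, is short once the diamond $D$ is on the table.
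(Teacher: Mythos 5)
Your proof is correct, and it follows the same overall strategy as the paper: dispose of the real boundary points, then for non‑real $\lambda\in\partial\Theta_n$ assume $\mu\coloneqq\gamma\lambda\in\Theta_n$ with $\gamma>1$ and derive a contradiction using \thref{lambdaconv} together with \thref{dubucmalik}. The departure is in how the contradiction is closed. The paper computes explicit distances from $\lambda$ to the two edges $\conv(1,\gamma\lambda)$ and $\conv(\gamma\lambda,\gamma^{2}\lambda^{2})$ of the polygon $C_{q-1}(\gamma\lambda)$, observes that they are positive, and then asserts a neighborhood of $\lambda$ lies inside the polygon; this skips over checking the remaining edges and that $\lambda$ sits on the correct side of each supporting line. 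Your route is cleaner and closes that gap: you first show $0\in\operatorname{int}C_K(\mu)$ for some $K$ (either via the angular density/equal‑spacing argument, or via \thref{dubucmalik}), and then invoke the standard convexity fact that if $0\in\operatorname{int}Q$ and $\mu\in Q$ then $[0,\mu)\subseteq\operatorname{int}Q$, so that $\lambda=\mu/\gamma\in(0,\mu)\subseteq\operatorname{int}\Theta_n$. This avoids coordinate computations entirely and makes the $\gamma>1$ slack do the work directly. You also supply the justification, which the paper merely asserts, that $\partial\Theta_n\cap\mathbb{R}=\{\pm 1\}$, via the diamond $\conv(1,\ii,-1,-\ii)\subseteq\Theta_n$ available once $n\ge4$ — this is a genuine improvement, and it correctly isolates where $n>3$ enters. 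One small caution: your parenthetical that \thref{dubucmalik} ``shows directly'' that $0$ is an interior point of $C_{q-1}(\mu)$ slightly overstates the quoted statement (it only gives $\mu^{q}\in C_{q-1}(\mu)$, from which $0\in C_{q-1}(\mu)$ follows by iteration and closedness, but interiority still needs the non‑collinearity of $1,\mu,\mu^{2}$ and the multi‑directional approach to $0$); your primary argument, which is self‑contained, is the one to keep.
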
     

\begin{proof}
    Since $\partial\Theta_n \cap \{ z \in \mathbb{C} \mid \Im z = 0 \} = \{-1,1\}$, it suffices to show that $\lambda \in E_n$ whenever $\lambda \in \partial\Theta_n$ and $\Im \lambda \ne 0$. 
    
    To this end, let $\lambda = a + \mathsf{i} b \in \partial\Theta_n$ and suppose, for contradiction, that $\exists \gamma \in (1,\infty)$ such that $\gamma\lambda\in\Theta_n$.  By \thref{realsymm}, it can be assumed, without loss of generality, that $b = \Im \lambda > 0$. Following \thref{lambdaconv}, $C_p(\gamma \lambda) \subseteq \Theta_n,\forall p \in \mathbb{N}$ and following \thref{dubucmalik}, there is a positive integer $q > 2$ such that $\gamma^q\lambda^q \in C_{q-1}(\gamma \lambda)$. We now demonstrate that that $\lambda$ lies in the interior of $C_{q-1}(\gamma \lambda)$. 
 
    Recall that the distance from a point $(x_0,y_0) \in \mathbb{R}^2$ to the line passing through the points $P_1 \coloneqq (x_1,y_1)$ and $P_2 \coloneqq (x_2,y_2)$ is given by
    \[ d \left( \overleftrightarrow{P_1 P_2},(x_0,y_0) \right) = \frac{\vert (x_2 - x_1)(y_1 - y_0) - (x_1 - x_0)(y_2 - y_1) \vert}{\vert\vert P_1 - P_2 \vert\vert_2}. \]

    Several tedious, but straightforward calculations reveals that: 
    \begin{itemize}
        \item if $(x_0,y_0) = (a,b)$, $P_1 = (1,0)$, and $P_2 = (\gamma a,\gamma b)$, then
    \[ d_1 \coloneqq d \left( \overleftrightarrow{P_1 P_2},(x_0,y_0) \right) = \frac{b(\gamma - 1)}{\vert\vert P_1 - P_2 \vert\vert_2} > 0; \]

        \item if $(x_0,y_0) = (a,b)$, $P_1 = (\gamma a,\gamma b)$, and $P_2 = (\gamma^2 (a^2 - b^2), 2\gamma^2 ab)$, then
    \[ d_2 \coloneqq d \left(\overleftrightarrow{P_1 P_2},(x_0,y_0)\right) = \frac{\gamma^2(\gamma - 1)b(a^2 + b^2)}{\vert\vert P_1 - P_2 \vert\vert_2} > 0. \]
    \end{itemize}
    
    Thus, the distance from  $\lambda$ to each of the line segments $\conv(1, \gamma \lambda)$ and $\conv(\gamma \lambda, \gamma^2 \lambda^2)$ of the polygon $C_{q-1}(\gamma \lambda)$ is positive. Thus, we may select $\varepsilon > 0$ small enough such that $N_\varepsilon (\lambda) \coloneqq \{ z \in \mathbb{C} \mid \vert z - \lambda \vert < \varepsilon \} \subset C_{q-1}(\gamma \lambda) \subseteq \Theta_n$, a contradiction.
\end{proof}

\begin{remark}
    The argument above fails for $n=2$ and $n=3$ because, if $b = 0$, then $d_1 = d_2 = d_3 = 0$. 
\end{remark}

\section{Implications for Further Inquiry}

In addition to establishing part \ref{secondstep} of \thref{bigprop} for Type II and Type III K-arcs which are not point-wise powers of special Type I arcs, proving the following conjecture, which is part \ref{thirdstep} of \thref{bigprop}, would complete the proof of \thref{karpito}. 

\begin{conjecture}
    Suppose that $n > 3$ and $p/q$ and $r/s$ are Farey neighbors. If $\lambda$ is a function satisfying the properties listed in part \ref{secondstep} of \thref{bigprop}, then $\lambda(\alpha) \in E_n$, $\forall \alpha \in [0,1]$.
\end{conjecture}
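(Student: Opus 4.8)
The plan is to reduce the conjecture to a sharp estimate on moduli. Part \ref{thirdstep} already holds for the Type 0 polynomial, since the bound $a+|b|\tan(\pi/n)\le 1$ for $\lambda=a+\ii b\in\Theta_n$ recalled in Subsection \ref{subsect:typezero} identifies the Type 0 arc $\conv(1,\omega_n)$ with $\partial\Theta_n\cap\{z\in\mathbb C\mid 0\le\Arg z\le 2\pi/n\}$. So assume $\mathsf X\in\{\mathsf I,\mathsf{II},\mathsf{III}\}$. The endpoints $\omega_q^p$ and $\omega_s^r$ lie on the unit circle and so belong to $\partial\Theta_n$ by \thref{modlessthanone}; since $n>3$ gives $\partial\Theta_n=E_n$, and since $\lambda(\alpha)\in\Theta_n$ for every $\alpha$ by part \ref{firststep} of \thref{bigprop}, it suffices to prove that for each $\alpha\in(0,1)$ one has $\gamma\lambda(\alpha)\notin\Theta_n$ for every $\gamma>1$; that is, $\lambda(\alpha)\in E_n$.

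To this end I would first show that $\alpha\mapsto\Arg\lambda(\alpha)$ is a strictly monotonic bijection of $[0,1]$ onto the closed arc $2\pi\bigl[\min\{p/q,r/s\},\max\{p/q,r/s\}\bigr]$; on the open interval this should follow by combining simplicity of the K-arc with a forbidden-ray analysis in the spirit of \thref{sqangle}, which prevents an interior point $\lambda(\alpha)$ from lying on a ray through a root of unity that bounds the sector. I would then establish the estimate: every nonzero $\mu\in\Theta_n$ whose argument lies in the above arc satisfies $|\mu|\le|\lambda(\alpha_\mu)|$, where $\alpha_\mu$ is the unique parameter with $\Arg\lambda(\alpha_\mu)=\Arg\mu$. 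Applying this to $\mu=\gamma\lambda(\alpha)$ yields $\gamma|\lambda(\alpha)|\le|\lambda(\alpha)|$, hence $\gamma\le 1$, the desired contradiction.

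The content is entirely in the estimate. Suppose $\mu=\gamma\lambda(\alpha)$, $\gamma>1$, is an eigenvalue of an $n$-by-$n$ stochastic matrix $A$; by \thref{realsymm} we may assume $\Im\mu>0$. One reduces in stages, the cases $n\le 3$ being immediate: if $A$ is reducible, then $\mu$ is an eigenvalue of a proper principal submatrix, which completes to a stochastic matrix of strictly smaller order, and an induction on $n$ applies; if $A$ is irreducible but imprimitive of index $h>1$, then $A$ is permutation-similar to a block-cyclic matrix whose nonzero spectrum consists of the $h$th roots of unity times the eigenvalues of a smaller primitive stochastic matrix, again reducing $n$. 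There remains the case that $A$ is irreducible and primitive, and here I would argue that maximality of $|\mu|$ in its direction forces the digraph $\Gamma(A)$ to be that of the companion matrix of the reduced Ito polynomial $\mathsf{P}_\alpha^{\mathsf{X}}$ for $(p/q,r/s)$ --- in effect a single directed $q$-cycle producing the boundary root of unity $\omega_q^p$ together with a single chord --- so that $\chi_A$ divides a monomial multiple of $\mathsf{P}_\alpha^{\mathsf{X}}$; since $\lambda(\alpha)$ is the zero of $\mathsf{P}_\alpha^{\mathsf{X}}$ of largest modulus among those with argument in the closed sector, this again forces $\gamma=1$.

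The principal obstacle is exactly this last reduction: that an extremal primitive stochastic matrix can be taken to be the companion matrix of an Ito polynomial. This is the heart of \karp's argument and precisely where his reasoning is least transparent; making it elementary and self-contained --- for instance by optimizing over the weights and the arc set of $\Gamma(A)$ and showing that any arc outside the cycle-plus-chord pattern can be deleted or merged without decreasing $|\mu|$ --- is the main difficulty. A more global alternative would be to establish part \ref{secondstep} of \thref{bigprop} for all Type II and Type III polynomials, show that the K-arcs of every Farey pair of order $n$, together with their conjugates and the points $\omega_q^p$, assemble into a simple closed Jordan curve $\mathcal K_n$, and then prove $\Theta_n\subseteq\overline{\Omega_n}$, where $\Omega_n$ is the bounded component of $\mathbb C\setminus\mathcal K_n$; combined with $\mathcal K_n\subseteq\Theta_n$ (part \ref{firststep}) and $\mathcal K_n=\partial\overline{\Omega_n}$, this would give $\mathcal K_n\subseteq\partial\Theta_n$. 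That route, however, still rests on the inclusion $\Theta_n\subseteq\overline{\Omega_n}$ --- the same extremal estimate in disguise --- and additionally requires the still-open Type II and Type III cases of part \ref{secondstep}.
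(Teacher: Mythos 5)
This statement is an \emph{open conjecture} in the paper; there is no proof to compare against. The paper says explicitly in the introduction that part \ref{thirdstep} of \thref{bigprop} ``is also left open,'' and in the closing section that proving the conjecture (together with the remaining Type II/III cases of part \ref{secondstep}) ``would complete the proof of \thref{karpito}.'' So the only meaningful question is whether your sketch closes the gap --- and you candidly say it does not, which is the correct assessment.

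Your outline is a reasonable reconstruction of \karp's general strategy: reduce to a directional modulus estimate, dispose of reducible and imprimitive matrices by decreasing the order, and then argue that a primitive stochastic matrix achieving maximal modulus in a given direction can be taken to be the companion matrix of the reduced Ito polynomial. You correctly pinpoint that last step as the crux and the place where \karp's own reasoning is least transparent; nothing in your sketch resolves it, so the proposal does not constitute a proof. Two further soft spots are worth flagging. First, you assert that $\alpha \mapsto \Arg \lambda(\alpha)$ ``should'' be a strictly monotonic bijection of $[0,1]$ onto the arc by combining simplicity with a forbidden-ray analysis. But simplicity of the path together with containment in the sector and avoidance of the two bounding rays does not preclude an S-shaped excursion whose argument is non-monotonic; establishing monotonicity of the argument is itself a nontrivial claim (it is essentially what Kirkland et al.\ obtain, via a separate modulus polynomial, and they take the \karp~theorem as given). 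Second, the reducible/imprimitive induction lacks a usable hypothesis: a diagonal block of a reducible stochastic matrix is substochastic rather than stochastic, the imprimitive case only tells you that $\mu^h$ lies in a smaller $\Theta$-region, and more to the point, deducing $\mu \in \Theta_m$ for some $m<n$ contradicts nothing unless one already knows $\lambda(\alpha)$ is not interior to $\Theta_m$, which is effectively the statement being proved. Your closing ``global alternative'' correctly observes that it reduces to the same extremal estimate in disguise and also needs the still-open Type II/III cases of part \ref{secondstep}, so it does not offer an escape route.
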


\section*{Acknowledgement}

We thank the anonymous referee for their careful review and suggestions that improved this work.  
     
\bibliographystyle{abbrv}
\bibliography{karparcs}

\end{document}